\newtheorem{theorem}{Theorem}[section]
\newtheorem{thm}{Theorem}[section]
\newtheorem{lem}[theorem]{Lemma}
\newtheorem{definition}[theorem]{Definition}
\newtheorem{remark}{Remark}
\newtheorem{fact}[theorem]{Fact}
\newtheorem{observation}[theorem]{Observation}
\def\S{{\mathfrak S}}
\def\Z{\mathbb{Z}}
\def\N{\mathbb{N}}
\def\lpk{\mathrm{lpk}\,}
\def\rise{\textrm{asc}\,}
\def\des{\textrm{des}\,}
\def\odes{\textrm{des}_1\,}
\def\edes{\textrm{des}_0\,}
\def\erise{\textrm{asc}_0\,}
\def\orise{\textrm{asc}_1\,}
\def\inv{\textrm{inv}\,}
\def\iasc{\textrm{asc}_i\,}
\def\ides{\textrm{des}_i\,}
\def\B{\mathcal{B}}
\def\D{\mathrm{Des}}
\def\oD{\mathrm{Des}_1}
\def\eD{\mathrm{Des}_0}
\def\od{\mathrm{des}_1}
\def\ed{\mathrm{des}_0}
\def\co{\mathrm{co}}
\numberwithin{equation}{section}
\date{\today}
\def\And{\mathrm{And}}
\def\and{\mathrm{d}}
\begin{document}

\title[Enumeration of permutations  by the parity of descent positions]{Enumeration of permutations  by the parity of descent positions}
\author{Qiongqiong Pan and Jiang Zeng}
\address{College of Mathematics and Physics, Wenzhou University\\
Wenzhou 325035, PR China}
\email{qpan@wzu.edu.cn}
\address{Universit\'e de Lyon,  Universit\'e Lyon 1,
UMR 5208 du CNRS, Institut Camille Jordan\\
F-69622, Villeurbanne Cedex, France}
\email{zeng@math.univ-lyon1.fr}

\begin{abstract} 
Noticing  that  some recent variations of descent polynomials 
are special cases of   Carlitz and Scoville's 
four-variable polynomials, which   enumerate
permutations by  the parity of descent and ascent positions,
we prove a $q$-analogue 
  of Carlitz-Scoville's generating function  by counting the inversion number and a type B analogue by enumerating the signed permutations with respect to the parity of desecnt and ascent positions. 
As a by-product of our formulas, we obtain a $q$-analogue of Chebikin's formula for alternating descent polynomials, an alternative proof of 
Sun's gamma-positivity   of  her bivariate Eulerian polynomials
and  a type B analogue, the latter refines  Petersen's gamma-positivity of the type B Eulerian polynomials. 
\end{abstract}

\keywords{Permutation, signed permutations, descent, alternating descent, gamma-positivity, Andr\'e permutations, min-max trees}
\maketitle

\tableofcontents

\section{Introduction}

In the past few years of this century, several variations and refinements of   
permutation descent,   according to  the parity of descent positions, have been studied,  
 see \cite{Che08, Re12, GZ14, MY16, Sun18, SZ19, Sun21, MFMY22, LMWW22, Pan22}. 
 This paper arose from the observation that some of these  results  are related to
  a work of Carlitz and Scoville~\cite{CS73} dated back to 1973. For example, Chebikin's \emph{alternating descent polynomial} \cite{Che08} 
 and the  \emph{bivariate 
  Eulerian polynomials}   in  H. Sun~\cite{Sun18} and Y. Sun and Zhai~\cite{SZ19} are both 
  special cases of Carlitz-Scoville's four-variable polynomials enumerating the permutations according to the parity of both descents and ascents.  
  On the other hand, this  connection  leads immediately to obtain two equivalent simpler versions of Carlitz-Scoville's generating function.
  As Carlitz and Scoville's original proof  
 relies on solving a system of differential equations, 
this prompted us to  find  a more conceptuel proof, which led up straightforwardly  to a $q$-analogue.

If $\pi$  is a permutation of $[n]:=\{1, \ldots, n\}$, 
an index $i\in [n-1]$ is a \emph{descent position} (resp. \emph{ascent position}) of $\pi$ if $\pi(i)>\pi(i+1)$ (resp. $\pi(i)<\pi(i+1)$).  Let $\des \pi$ (resp. $\odes\pi$ and $\edes\pi$)
 be the number of descents of $\pi$ (resp.  at odd and even  positions), i.e.,
 $$
\des_{\nu}(\pi)=\#\{i\in [n]| \pi(i)>\pi(i+1) \; {\rm and}\; i\equiv \nu \pmod{2}\}\quad (\nu\in \{0,1\}).
 $$
  The statistics  $\rise \pi$, $\orise\pi$ and $\erise\pi$ are defined similarly. For $i\in\{2,3,\ldots, n-1\}$, we say $\pi(i)$ is a valley (resp. peak) of $\pi$, if $\pi(i-1)>\pi(i)<\pi(i+1)$ (resp. $\pi(i-1)<\pi(i)>\pi(i+1)$) and $\pi(i)$ is a double ascent (resp. double descent) of $\pi$, if $\pi(i-1)<\pi(i)<\pi(i+1)$ (resp. $\pi(i-1)>\pi(i)>\pi(i+1)$). Finally 
we recall that the inversion number of $\pi$ is   
$\inv\pi =|\{(i,j)| \pi(i)>\pi(j), \; 1\leq i<j\leq n\}|$.
  
Define the  enumerative  polynomial of permutations of $\S_n$  by the parity of ascent and descent positions as 
$$
P_n(x_0, x_1, y_0, y_1,q)=\sum_{\sigma\in \S_n} x_0^{\erise\sigma}x_1^{\orise\sigma}y_0^{\edes\sigma}y_1^{\odes\sigma}q^{\inv\sigma}.
$$
Recall the following $q$-exponential series
$$
\exp_q(x)=\sum_{n\geq 0} \frac{x^n}{n!_q},
$$
where $0!_q=1$ and  $n!_q=\prod_{i=1}^n(1+q+\cdots +q^{i-1})$ for $n\geq 1$, and the $q$-trignometric series
\begin{align*}
\cosh_q t&=\sum_{n\geq 0}\frac{t^{2n}}{(2n)!_q}, \quad 
\sinh_q t=\sum_{n\geq 1}\frac{t^{2n-1}}{(2n-1)!_q};\\
\cos_q x&=\sum_{n=0}^\infty (-1)^n\frac{x^{2n}}{(2n)!_q},\quad
\sin_q x=\sum_{n=1}^\infty (-1)^{n-1}\frac{x^{2n-1}}{(2n-1)!_q}.
\end{align*}
\begin{thm}\label{thmA3} Let 
$\alpha=\sqrt{(y_0-x_0)(y_1-x_1)}$.
Then
\begin{gather}
\label{q-CS}
\sum_{n\geq 1}P_{n}(x_0, x_1, y_0, y_1,q)\frac{t^{n}}{n!_q}\nonumber\\
=\frac{(x_1+y_1)\cosh_q(\alpha t)+\alpha \sinh_q(\alpha t)-y_1 (\cosh_q^2(\alpha t)-\sinh_q^2(\alpha t))-x_1}
{x_0x_1-(x_0y_1+x_1y_0)\cosh_q(\alpha t)+y_0y_1 (\cosh_q^2(\alpha t)-\sinh_q^2(\alpha t))}.
\end{gather}
\end{thm}
\begin{remark}
When $q=1$ Eq.~\eqref{q-CS} reduces to  Carlitz-Scoville's formula~\cite[Theorem~3.1]{CS73}~\footnote{Carlitz and Scoville counted a conventional rise at the beginning as   position $0$ and a conventional descent at the end as  position $n\pmod{2}$.}
\begin{align}\label{CS}
\sum_{n\geq 1}P_{n}(x_0, x_1, y_0, y_1,1)
\frac{t^{n}}{n!}=
\frac{(x_1+y_1)\sum_{n\geq 1}\frac{\beta^{n-1}t^{2n}}{(2n)!}+\sum_{n\geq 1}\frac{\beta^{n-1}\,t^{2n-1}}{(2n-1)!}}
{1-(x_0y_1+x_1y_0)\sum_{n\geq 1} \frac{\beta^{n-1}t^{2n}} {(2n)!}},
\end{align}
with $\beta=(y_0-x_0)(y_1-x_1)$. For the homegeous Eulerian polynomials
 $P_n(y,y,x,x,1)$, i.e., $\sum_{\sigma\in \S_n} 
x^{\des \sigma} y^{\rise \sigma}$,   the corresponding formula reads
\begin{align}\label{cf}
\sum_{n\geq 1}P_n(y,y,x,x,1)\frac{t^n}{n!}=\frac{e^{xt}-e^{yt}}{xe^{yt}-ye^{xt}}.
\end{align}
Chen and Fu~\cite{CF22} recently 
gave a  context-free  grammar proof of  \eqref{cf}.
\end{remark}
Let $\rm{UD}_n$ be  the set of \emph{up-down} permutations of
$12\ldots n$, 
 i.e.,  permutations $\sigma:=\sigma(1)\ldots \sigma(n)$ such that 
$\sigma(1)<\sigma(2)>\sigma(3)<\cdots$. 
Obviously  
$$P_{n}(0,1, 1,0,q)
=\sum_{\sigma \in \rm{UD}_n} q^{\inv \sigma}$$
and  Eq.~\eqref{q-CS} reduces to a  $q$-analogue of 
Andr\'e's classical result (see \cite{EC1, GJ04,  JV15})~:
\begin{align}\label{andre}
1+\sum_{n\geq 1} P_{n}(0,1, 1,0,q) \frac{x^n}{n!_q}=\frac{1+\sin_q x}{\cos_q x}.
\end{align}
For  the following two special cases:
\begin{subequations}\label{sun-chebikin-qq}
\begin{align}\label{sunhua-polynomial-q}
A_n(x,y,q):=P_n(1,1, y,x,q)=\sum_{\sigma\in \S_n} x^{\odes\sigma}y^{\edes\sigma}q^{\inv \sigma},\\
\widehat{A}_n(x,y,q):=P_n(y,1, 1,x,q)=\sum_{\sigma\in \S_n} x^{\odes\sigma}y^{\erise\sigma}q^{\inv \sigma},\label{chebikin-polynomial-q}
\end{align} 
\end{subequations}
we derive from Theorem~\ref{thmA3} that
\begin{subequations}\label{gf-sun-chebikin-q}
\begin{align}\label{pz1}
\sum_{n\geq 1}A_{n}(x,y,q)\frac{t^{n}}{n!_q}&=
\frac{(1+x)\cosh_q(\alpha t)+\alpha \sinh_q(\alpha t)-x (\cosh_q^2(\alpha t)-\sinh_q^2(\alpha t))-1}
{1-(x+y)\cosh_q(\alpha t)+xy (\cosh_q^2(\alpha t)-\sinh_q^2(\alpha t))},\\
\sum_{n\geq 1}\widehat{A}_{n}(x,y,q)\frac{t^{n}}{n!_q}&
=
\frac{(1+x)\cos_q(\alpha t)-\alpha \sin_q(\alpha t)-x (\cos_q^2(\alpha t)+\sin_q^2(\alpha t))-1}
{y-(xy+1)\cos_q(\alpha t)+x(\cos_q^2(\alpha t)+\sin_q^2(\alpha t))}\label{pz2}
\end{align}
\end{subequations}
with $\alpha=\sqrt{(1-x)(1-y)}$.

\begin{remark}
Formulae~\eqref{CS}, \eqref{pz1} and ~\eqref{pz2} are actually  equivalent.
Indeed, 
for any  $\sigma\in \S_n$ it is clear that
\begin{subequations}
\begin{align}
\edes\sigma+\erise\sigma&=\lfloor (n-1)/2\rfloor,\label{R1}\\
\odes\sigma+\orise\sigma&=\lfloor n/2\rfloor. \label{R2}
\end{align}
\end{subequations}
Hence the distribution of the 
 quadruple statistics $(\erise, \orise,\edes,\odes)$ is equivalent to any  pair of the statistics  in 
$
\{\odes,\orise\}\times  \{\edes,\erise\}$. In particular, we have
\begin{align}\label{tildeA:A}
\widehat{A}_{n}(x,y,q)&=y^{\lfloor (n-1)/2\rfloor}A_n(x, 1/y,q),\\
\intertext{and}
P_{n}(x_0, x_1, y_0, y_1,q)
&=x_0^{\lfloor (n-1)/2\rfloor}x_1^{\lfloor n/2\rfloor} A_{n}\left(\frac{y_1}{x_1}, \frac{y_0}{x_0},q\right).
\end{align}
The polynomial  
 $A_n(x,x,q):=\sum_{\sigma\in \S_n} x^{\des\sigma}q^{\inv\sigma}$ is a classical 
  $q$-analogue of Eulerian polynomials and Eq.~\eqref{pz1} yields  Stanley's formula~\cite{St76, Pe15},
 \begin{align}
 1+\sum_{n\geq 1}xA_n(x,x,q) \frac{t^n}{n!_q}=\frac{1-x}{1-x\exp_q((1-x)t)},
\end{align}
of which another  refinement was  given in \cite{PZ19}.

As a variation  of descent,  Chebikin~\cite{Che08} introduced
 the \emph{alternating descent set} of  permutation $\pi\in \S_n$ by
 $$
 \widehat{D}(\pi)=\{i\in [n-1]| \pi(i)>\pi(i+1)\; \textrm{and $i$ is odd or}\;
\pi(i)<\pi(i+1)\;   \textrm{and $i$ is even}\}.
$$
Hence, 
  the number of alternating descents 
  $\widehat{\des}\pi=|\widehat{D}(\pi)|$ equals 
 $\des_1\sigma+\rise_0\sigma$ and 
formula \eqref{pz2} with $x=y$ and $q=1$  reduces to  
\begin{align}\label{eq:che}
1+\sum_{n\geq 1}x\widehat{A}_{n}(x,x,1)\frac{t^{n}}{n!}=
\frac{1-x}{1-x(\sec (1-x)t+\tan (1-x)t)},
\end{align}
which   is equivalent to \cite[Theorem 4.2 ]{Che08}, see also \cite[Eq. (22)]{GZ14}.
As Chebikin, being unaware of  the work of Carlitz and Scoville, 
 Sun~\cite{Sun18} and 
Sun and Zhai~\cite{SZ19}  reconsidered  the polynomials $A_n(x,y,1)$, and a cumbersome formula for \eqref{pz1} is given in 
\cite[Theorem 2.2]{SZ19}. Other proofs of formula \eqref{eq:che}  and generalizations appeared  in \cite{Re12,GZ14, LMWW22, Pan22}. 
\end{remark}

As the original proof of \eqref{CS} with $q=1$ in  \cite{CS73} is not easy (see also the solution of Exercise 4.3.14 in \cite{GJ04}),
we shall give a more conceptual   proof of \eqref{pz1}, which is equivalent to Theorem~\ref{CS},  by exploring   a \emph{sieve method}, see \cite{St76, Ga79, Che08, EC1}.

 Our second goal is to give a type B analogue of 
 Carlitz and Scoville's formula, i.e., 
 Theorem~\ref{thmA3} with $q=1$.
Denote by $\mathcal{B}_n$ the collection of type B permutations $\sigma$  of the set $[\pm n]:=\{\pm 1,\ldots,\pm n\}$ such that $\sigma(-i)=-\sigma(i)$ for all $i\in [n]$, obviously, $|\sigma|:=|\sigma(1)|\ldots|\sigma(n)|\in\S_n$. As usual (see \cite{BB05, Pe15}), we always assume that type B permutations are prepended by $0$. That is, we identify an element $\sigma=\sigma(1)\ldots\sigma(n)$ in $\mathcal{B}_n$ with the word $\sigma(0)\sigma(1)\ldots\sigma(n)$, where $\sigma(0)=0$. 
We say that $\sigma\in\mathcal{B}_n$ has a descent (resp. ascent) at position $i$, if  $\sigma(i)>\sigma(i+1)$  (resp.
$\sigma(i)<\sigma(i+1)$) for $i\in\{0\}\cup[n-1]$. By abuse of notation, in this section,  we use 
$\des \sigma$ (resp. $\odes\sigma$ and $\edes\sigma$) to denote 
 the number of descents of $\sigma$ (resp.  at odd and even  positions). The statistics  $\rise \sigma$,
 $\orise\sigma$ 
 and $\erise\sigma$ are defined similarly for the ascents. 

Define  the enumerative polynomials
\begin{align}\label{sunhua-polynomial-B}
B_n(x,y):=\sum_{\sigma\in \B_n} x^{\odes\sigma}y^{\edes\sigma}.
\end{align} 
\begin{thm}\label{thmB1} Let $\alpha=\sqrt{(1-x)(1-y)}$. Then
\begin{subequations}
\begin{align}\label{eq:B1}
&\sum_{n\geq1}B_{2n}(x,y)\frac{t^{2n}}{(2n)!}
=\frac{(x+y)\cosh(2\alpha t)+(1-x)(1-y)\cosh(\alpha t)-(1+xy)}{(1+xy)
-(x+y)\cosh(2\alpha t)},\\
&\sum_{n\geq 1}B_{2n-1}(x,y)\frac{t^{2n-1}}{(2n-1)!}
=\frac{\alpha(1+y)\,\sinh(\alpha t)}{(1+xy)-(x+y)\cosh(\alpha t)}.\label{eq:B2}
\end{align}
\end{subequations}
\end{thm}

\begin{remark}
When $x=y$, the polynomial
 $B_n(x,x):=\sum_{\sigma\in \B_n} x^{\des\sigma}$ is the 
  usual \emph{Eulerian polynomial of type B} and Theorem~\ref{thmB1} is equivalent to the known generating 
 function, see \cite[Corollary 3.9]{CG07} or 
 \cite[Theorem 13.3]{Pe15},
 \begin{align}
 \sum_{n\geq 0}B_n(x,x) \frac{t^n}{n!}=\frac{(x-1)e^{t(x-1)}}{x-e^{2t(x-1)}}.
\end{align}
\end{remark}

Now, consider the following variant of $B_n(x,y)$
\begin{equation}
\widehat{B}_n(x,y):=\sum_{\sigma\in \B_n} x^{\odes\sigma}y^{\erise\sigma}=y^{\lfloor (n+1)/2\rfloor}B_n(x, 1/y).
\label{chebikin-polynomial-B}
\end{equation}
From Theorem~\ref{thmB1} we derive plainly the  generating function of the latter polynomials. 
\begin{thm}\label{thmB2}Let $\alpha=\sqrt{(1-x)(1-y)}$.  Then
\begin{subequations}
\begin{align}\label{z1}
&\sum_{n\geq 1}\widehat{B}_{2n}(x,y)\frac{t^{2n}}{(2n)!}
=
\frac{(1+xy)\cos(2\alpha t)-(1-x)(1-y)\cos(\alpha t)-(x+y)}{(x+y)-(1+xy)\cos(2\alpha t)},\\
\label{z2}
&\sum_{n\geq 1}\widehat{B}_{2n-1}(x,y)\frac{t^{2n-1}}{(2n-1)!}
=\frac{-\alpha(1+y)\,\sinh(\alpha t)}{(x+y)-(1+xy)\cos(2\alpha t)}.
\end{align}
\end{subequations}

\end{thm}

\begin{remark}

Similar to Chebikin's   alternating descent set of type A (see \cite{Che08}), 
we can define the 
  \emph{alternating descent set } of any $\sigma\in\mathcal{B}_n$  by
$$
 \widehat{D}_B(\pi)=\{i\in \{0\}\cup [n-1]|
  \pi(i)>\pi(i+1)\; \textrm{if $i$ is odd or}\;
\pi(i)<\pi(i+1)\;   \textrm{if $i$ is even}\}.
$$  
Let $\widehat{\des}_B(\sigma)=|\widehat{D}_B(\sigma)|$.  Clearly 
$\widehat{B}_n(x,x)=\sum_{\sigma\in\mathcal{B}_n}
 x^{\widehat{\des}_{B}(\sigma)}$, which is 
the  $n$-th \emph{alternating Eulerian polynomial of type B} 
in \cite{MY16}, and
Theorem~\ref{thmB2} reduces to  the generating function in  \cite{MFMY22,  DZ21,Pan22},
\begin{align}\label{p6}
\sum_{n\geq0}\hat{B}_n(x,x)\frac{u^n}{n!}=\frac{x-1}{(x-1)\cos(u(1-x))+(x+1)\sin(u(1-x))}.
\end{align}
\end{remark}
Define  the  general enumerative  polynomials of permutations by the parity of the ascent and descent positions:
\begin{equation}\label{QB}
P^B_n(x_0, x_1, y_0, y_1)=\sum_{\sigma\in \mathcal{B}_n} x_0^{\erise\sigma}x_1^{\orise\sigma}y_0^{\edes\sigma}y_1^{\odes\sigma}.
\end{equation}
For any  $\sigma\in \B_n$ we have 
\begin{equation}\label{linkB}
\begin{split}
\edes\sigma+\erise\sigma=\lfloor (n+1)/2\rfloor,\\
\odes\sigma+\orise\sigma=\lfloor n/2\rfloor.
\end{split}
\end{equation}
Hence the distribution of the 
 quadruple statistics $(\erise, \orise,\edes,\odes)$ is equivalent to any of the four pairs in 
$
\{\odes,\orise\}\times  \{\edes,\erise\}$.
It follows that 
\begin{align}\label{PBB}
P^B_{n}(x_0, x_1, y_0, y_1)
&=x_0^{\lfloor (n+1)/2\rfloor}x_1^{\lfloor n/2\rfloor} B_{n}\left(\frac{y_1}{x_1}, \frac{y_0}{x_0}\right).
\end{align}
We derive plainly the following generating function from Theorem~\ref{thmB1}.
\begin{thm}\label{thmB3} We have 
\begin{align}
\sum_{n\geq 1}P^B_{2n}(x_0, x_1, y_0, y_1)\frac{t^{2n}}{(2n)!}=\frac{(x_0y_1+x_1y_0)\sum_{n\geq 0}\frac{\alpha^n (2t)^{2n}}{(2n)!}+\sum_{n\geq 0}\frac{\alpha^{n+1} t^{2n}}{(2n)!}-(x_1x_0+y_0y_1)}
{(x_0x_1+y_0y_1)-(y_1x_0+x_1y_0)\sum_{n\geq 0}\frac{\alpha^n (2t)^{2n}}{(2n)!}},
\end{align}
and 
\begin{align}
\sum_{n\geq 1}P^B_{2n-1}(x_0, x_1, y_0, y_1)\frac{t^{2n-1}}{(2n-1)!}=
\frac{(y_0^2-x_0^2)(y_1-x_1)\,\sum_{n\geq 0}\frac{\alpha^{n} t^{2n+1}}{(2n+1)!}}
{(x_0x_1+y_0y_1)-(x_0y_1+x_1y_0)\sum_{n\geq 0}\frac{\alpha^n (2t)^{2n}}{(2n)!}},
\end{align}
where $\alpha=(y_0-x_0)(y_1-x_1)$.
\end{thm}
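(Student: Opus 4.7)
The plan is to deduce Theorem~\ref{thmB3} directly from Theorem~\ref{thmB1} via the transfer identity~\eqref{PBB}, which writes $P^B_n(x_0,x_1,y_0,y_1)$ as a monomial in $x_0,x_1$ times an evaluation of $B_n(x,y)$. Setting $X:=y_1/x_1$ and $Y:=y_0/x_0$, one has $(1-X)(1-Y)=\alpha/(x_0x_1)$ with $\alpha=(y_0-x_0)(y_1-x_1)$ as in the theorem, and~\eqref{PBB} specialises to
\begin{align*}
P^B_{2m}(x_0,x_1,y_0,y_1)&=(x_0x_1)^m\,B_{2m}(X,Y),\\
P^B_{2m-1}(x_0,x_1,y_0,y_1)&=x_0^m x_1^{m-1}\,B_{2m-1}(X,Y).
\end{align*}

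For the even generating function I would apply the formal rescaling $u\mapsto t\sqrt{x_0x_1}$ to~\eqref{eq:B1}, which turns $B_{2m}(X,Y)u^{2m}/(2m)!$ into $P^B_{2m}(x_0,x_1,y_0,y_1)t^{2m}/(2m)!$. Under the rescaling $\tilde\alpha^n(2u)^{2n}$ becomes $\alpha^n(2t)^{2n}$ and $\tilde\alpha^{n+1}u^{2n}$ becomes $\alpha^{n+1}t^{2n}/(x_0x_1)$, while $X+Y$ and $1+XY$ become $(x_0y_1+x_1y_0)/(x_0x_1)$ and $(x_0x_1+y_0y_1)/(x_0x_1)$ respectively. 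A common factor $1/(x_0x_1)$ then cancels between numerator and denominator, giving the first identity of Theorem~\ref{thmB3}.

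For the odd case the prefactor $x_0^m x_1^{m-1}$ is not a pure power of $x_0x_1$, but the identity $x_0^m x_1^{m-1}t^{2m-1}=\sqrt{x_0/x_1}\,(t\sqrt{x_0x_1})^{2m-1}$ shows that after the same rescaling of~\eqref{eq:B2} one must multiply by $\sqrt{x_0/x_1}$. The transient factor $\sqrt{x_0x_1}$ coming out of $\tilde\alpha^n u^{2n+1}$ combines with $\sqrt{x_0/x_1}$ to produce $x_0$, which reduces the prefactor $(Y^2-1)(X-1)=(y_0^2-x_0^2)(y_1-x_1)/(x_0^2x_1)$ to $(y_0^2-x_0^2)(y_1-x_1)/(x_0x_1)$. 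The denominator is identical to that of the even case and also carries a factor $1/(x_0x_1)$; after its cancellation the second identity of Theorem~\ref{thmB3} drops out.

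The whole argument is nothing more than a change of variable in Theorem~\ref{thmB1}, so there is no conceptual obstacle. The only point requiring care is the odd case, where half-integer powers of $x_0x_1$ appear transiently through the rescaling and must be tracked to ensure they cancel in the final rational expression.
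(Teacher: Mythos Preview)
Your proposal is correct and follows exactly the approach the paper intends: the paper simply asserts that Theorem~\ref{thmB3} is ``derived plainly'' from Theorem~\ref{thmB1} via the transfer identity~\eqref{PBB}, and you have supplied the details of that substitution. Your bookkeeping with the formal square roots in the odd case is fine since the half-integer powers cancel in the final rational expression; if you prefer to avoid them altogether, you can observe that in \eqref{eq:B2} only the combination $\tilde\alpha^n u^{2n+1}/(2n+1)!$ times $(Y^2-1)(X-1)$ over the denominator occurs, and this product is already a power series in $u^2$ times $u$, so one may rescale $u^2\mapsto x_0x_1 t^2$ directly and multiply by the extra factor $x_0$ without ever invoking $\sqrt{x_0x_1}$.
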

In view of \eqref{chebikin-polynomial-B} and \eqref{PBB}, 
Theorem~\ref{thmB1}, Theorem~\ref{thmB2} and Theorem~\ref{thmB3} are equivalent. We shall give a proof of  Theorem~\ref{thmB1} in the same  vein as the  proof of \eqref{pz1} with $q=1$.

An important feature of Eulerian polynomials is the gamma-nonnegativity~\cite{Pe15}.   More recently, Sun~\cite{Sun21} proved that 
the bivariate Eulerian polynomials $(1+y)A_{2n}(x,y,1)$ and $A_{2n+1}(x,y,1)$  are  $\gamma$-positive (see Theorem~\ref{thm:sun}).  Our third goal is to  
derive some symmetric expansion formulae for 
 bivariate polynomials allied to  the above four families of bi-Eulerian 
 polynomials. This will be done by applying   
their  generating functions and combinatorics of Andr\'e permutations~\cite{FS71, FS76, HR98}.

The rest of this paper is organised as follows. 
We will first 
 study the symmetric and gamma
expansions of  the two   sequences of bi-polynomials as well as their type analogues
 in Section~2 and postpone the proof of   \eqref{pz1} and Theorem~\ref{thmB1} to Section~3 and Section~4, respectively.  
We conclude with   some open  problems in Section~5.

As suggested by a referee, 
for reader's convenience, we  list the main permutation statistics of this paper  in the following table.
\begin{table}[h!]
\begin{tabular}{|c|c|}
\hline
$\edes\pi$& the number of descents of $\pi$ at even positions \\
\hline
$\odes\pi$& the number of descents of $\pi$ at odd positions \\
\hline
$\erise\pi$& the number of ascents of $\pi$ at even positions \\
\hline
$\orise\pi$& the number of ascents of $\pi$ at odd positions \\
\hline
$\inv\pi$& the number of inversions  of $\pi$ \\
\hline
$\lpk(\pi)$& the number of left peaks of $\pi$, see \eqref{lpk} \\
\hline
\end{tabular}
\medskip
\caption{Main statistics of $\pi\in \S_n$ }
\end{table}
\section{Symmetric and positive expansions of bi-Eulerian polynomials}
Define  two families of bi-Eulerian polynomials $(\widetilde{A}_n(x,y))_{n\geq 1}$
and $(\overline{A}_n(x,y))_{n\geq 1}$
by
\begin{subequations}\label{ST}
\begin{align}\label{S-A}
\widetilde{A}_{2n}(x,y)&=(1+y) A_{2n}(x,y,1), \quad 
\widetilde{A}_{2n-1}(x,y)=A_{2n-1}(x,y,1),\\
\overline{A}_{2n}(x,y)&=(1+y) \widehat{A}_{2n}(x,y,1), \quad 
\overline{A}_{2n-1}(x,y)=\widehat{A}_{2n-1}(x,y,1); \label{T-hatA}
\end{align}
\end{subequations}
and their type B analogues
$(\widetilde{B}_n(x,y))_{n\geq 1}$
and $(\overline{B}_n(x,y))_{n\geq 1}$
by
\begin{subequations}
\begin{align}\label{B1}
\widetilde{B}_{2n}(x,y)&=B_{2n}(x,y), \quad 
\widetilde{B}_{2n-1}(x,y)=(1+y)^{-1}B_{2n-1}(x,y),\\\label{B2}
\overline{B}_{2n}(x,y)&= \widehat{B}_{2n}(x,y), \quad 
\overline{B}_{2n-1}(x,y)=(1+y)^{-1}\widehat{B}_{2n-1}(x,y).
\end{align}
\end{subequations}
By  \eqref{pz1} and \eqref{pz2} (resp. Theorem~\ref{thmB1} and Theorem~\ref{thmB2})
both  polynomials  $\widetilde{A}_{n}(x,y)$
and $\overline{A}_{n}(x,y)$ (resp.  $\widetilde{B}_{n}(x,y)$
and $\overline{B}_{n}(x,y)$ ) are symmetric in $x$ and $y$.

Recall that a polynomial with real coefficients  
$P(x)=\sum_{i=0}^n a_i x^i$ is 
\emph{gamma-positive} (resp. \emph{semi-gamma-positive}) if there are nonnegative numbers 
$\gamma_{i}$ such that 
$P(x)=\sum_{i}\gamma_{i} x^i(1+x)^{n-2i}$ 
(resp. $P(x)=(1+x)^\nu\sum_{i}\gamma_{i} x^i(1+x^2)^{\lfloor n/2\rfloor-i}$ with $\nu=0$ or 1.), see \cite{Pe15} and \cite{MMY20} respectively. It is known that the gamma-positivity is stronger than the semi-gamma-positivity~\cite{MMY20}.

In this section,  we shall first derive the semi-gamma-positive formulae for the bi-Eulerian polynomials 
$\widetilde{A}_n(x,y), \overline{A}_n(x,y), \widetilde{B}_n(x,y)$
and  $\overline{B}_n(x,y)$ from their generating functions  and then  apply  Hetyei-Reiner's min-max tree model~\cite{HR98} for permutations  to derive 
the corresponding $\gamma$-positive formulae for  $\widetilde{A}_n(x,y)$ and $\overline{A}_n(x,y)$ as well as their type B analogues 
 by refining Petersen's proof for the $\gamma$-positivity of type B Eulerian polynomials~\cite{Pe15}.

\subsection{Semi-gamma-positivity of bi-Eulerian polynomials}

%
The following  generalizes the semi-gamma-positivity of Eulerian polynomials to bi-Eulerian polynomials.
\begin{thm}\label{thm-typeA}
Let  $a(n,j)$ (resp. $\bar a(n,j)$) be  the number of permutations in $\S_n$ with $j$ odd  descents and without even descents (resp. ascents)
for $n\geq 1$  and $0\leq 2j\leq n$.  Then
\begin{subequations}
\begin{align}\label{sym-S}
\widetilde{A}_n(x,y)&=\sum_{j=0}^{\lfloor \frac{n}{2}\rfloor}
a(n,j)\,(x+y)^j (1+xy)^{\lfloor \frac{n}{2}\rfloor-j};\\
\overline{A}_n(x,y)&=\sum_{j=0}^{\lfloor \frac{n}{2}\rfloor}
\bar a(n,j)\,(x+y)^j (1+xy)^{\lfloor \frac{n}{2}\rfloor-j},\label{sym-T}
\end{align}
and
\begin{align}\label{a-a-bar}
\bar a(n,j)=a(n, \left\lfloor n/2\right\rfloor-j)\quad  \textrm{for}\quad  0\leq j\leq \left\lfloor {n/2}\right\rfloor.
\end{align}
\end{subequations}
\end{thm}
\begin{proof}
Let $\alpha(x,y)=(1-x)(1-y)$. Then 
$$
\alpha(x,y)=(1+xy)\cdot  \alpha\biggl(\frac{x+y}{1+xy},0\biggr).
$$
It follows   from \eqref{gf-sun-chebikin-q} that 
\begin{subequations}
\begin{align}
\widetilde{A}_n(x,y)&=(1+xy)^{\lfloor \frac{n}{2}\rfloor}
 A_{n}\left(\frac{x+y}{1+xy},0, 1\right),\\
\overline{A}_n(x,y)&=(1+xy)^{\lfloor \frac{n}{2}\rfloor}
 \widehat{A}_{n}\left(\frac{x+y}{1+xy},0, 1\right),
\end{align}
\end{subequations}
which are obviously equivalent to   \eqref{sym-S}  and \eqref{sym-T}, respectively.

Define the completion $\sigma^c$ of $\sigma\in \S_n$  
by $\sigma^c(i)=n+1-\sigma(i)$ for $1\leq i\leq n$. It is clear that 
the mapping  $\varphi: \sigma\mapsto \sigma^c$  is an involution on $\S_n$ and satisfies 
$
\ides\sigma=\iasc \sigma^c
$ for $i\in \{0, 1\}$. Thus  
$$
(\des_1\sigma^c,\; \rise_0\sigma^c)=(\rise_1\sigma, \;\des_0\sigma)
=\left(\lfloor{n/2}\rfloor-\des_1\sigma,\; \des_0\sigma\right).
$$
Eq. \eqref{a-a-bar} follows  by restricting $\varphi$ on the set of permutations in $\S_n$ with $j$ odd descents and without even descent.
\end{proof}
\begin{remark} 
The combinatorial interpretation of $a_{n,j}$ actually follows from 
the existence of  formula~\eqref{sym-S}, which  
 was first  conjectured by  Sun~\cite{Sun18} and then  proved by 
 Sun and Zhai~\cite{SZ19}.  
\end{remark}

Similarly, we have the following B-analogue of Theorem~\ref{thm-typeA}.

\begin{thm}\label{TB1}
Let $b(n,j)$ (resp. $\bar b(n,j)$) be  the number of permutations in $\B_n$ with $j$ odd  descents and without even descents (resp. even ascents). 
 Then 
 \begin{subequations}
\begin{align}\label{sym-U}
\widetilde{B}_{n}(x,y)&=\sum_{j=0}^{\lfloor \frac{n}{2}\rfloor}
b(n,j)\,(x+y)^j (1+xy)^{\lfloor \frac{n}{2}\rfloor-j},\\
\overline{B}_{n}(x,y)&=\sum_{j=0}^{\lfloor \frac{n}{2}\rfloor}
\bar b(n,j)\,(x+y)^j (1+xy)^{\lfloor \frac{n}{2}\rfloor-j},\label{sym-V}
\end{align}
and
\begin{align}\label{b-b-bar}
\bar b(n,j)=b(n, \left\lfloor n/2\right\rfloor-j)\quad  \textrm{for}\quad  0\leq j\leq \left\lfloor {n/2}\right\rfloor.
\end{align}
\end{subequations}
\end{thm}
\begin{proof}
Let $\alpha(x,y)=(1-x)(1-y)$. Then 
$$
\alpha(x,y)=(1+xy)\cdot  \alpha((x+y)/(1+xy),0).
$$
We derive  from Theorem~\ref{thmB1} and Theorem~\ref{thmB2} immediately
\begin{subequations}
\begin{align}
\widetilde{B}_{n}(x,y)=(1+xy)^{\lfloor \frac{n}{2}\rfloor}
 B_{n}\left(\frac{x+y}{1+xy},0\right),\\
 \overline{B}_{n}(x,y)=(1+xy)^{\lfloor \frac{n}{2}\rfloor}
\widehat{B}_{n}\left(\frac{x+y}{1+xy},0\right),
\end{align}
\end{subequations}
which are what  \eqref{sym-U} and \eqref{sym-V} mean.

Consider  the negation $\bar \sigma$ of $\sigma\in \B_n$  
by $\bar \sigma(i)=-\sigma(i)$ for $1\leq i\leq n$. It is clear that 
the mapping  $\phi: \sigma\mapsto \bar \sigma$  is an involution on $\S_n$ and satisfies 
$\ides\sigma=\iasc \bar \sigma$ for $i\in \{0, 1\}$. Thus
$$
(\des_1\bar \sigma,\; \rise_0\bar \sigma)=(\rise_1\sigma, \;\des_0\sigma)
=\left(\lfloor{n/2}\rfloor-\des_1\sigma,\; \des_0\sigma\right).
$$
Eq. \eqref{b-b-bar} follows  by restricting $\phi$ on the set of permutations in $\B_n$ with $j$ odd descents and without even descent.
\end{proof}


We note that Eq.~\eqref{sym-S} does not   directly reduce
 to the known $\gamma$-positivity formula of Eulerian polynomials $A_n(x,x)$ when $x=y$. To derive the latter expansion 
we shall appeal to the min-max tree representations of permutations due to 
Hetyei and Reiner~\cite{HR98}.
Similarly, to derive  gamma-positivity formula  of 
 type B Eulerian polynomials $B_n(x,x)$   from Theorem~\ref{TB1} we shall appeal 
   to an action on permutations due to  Petersen~\cite{Pe15}.


\subsection{Gamma-positivity  of 
bi-Eulerian polynomials of type A}
We define the \emph{min-max tree} $M(w)$ 
associated to a sequence of 
distincte integers $w=w_1\ldots w_n$ as follows. 
\begin{itemize}
\item [(1)]
First, $M(w)$ is a binary tree with vertices labelled $w_1,\ldots, w_n$. Let $i$ be the least integer for which either $w_i=min\{w_1,w_2,\ldots,w_n\}$ or $w_i=\max\{w_1,w_2,\ldots,w_n\}$. Define $w_i$ to be the root of $M(w)$.
\item[(2)]
Then recursively define $M(w_1,\ldots,w_{i-1})$ and $M(w_{i+1},\ldots,w_n)$ 
to be the left and right subtree of $w_i$, respectively.
\end{itemize}
 Conversely, the left-first order reading of the tree $M(w)$ yields the sequence $w$, see \cite{HR98, FH16} and \cite[pp. 57-61]{EC1}.

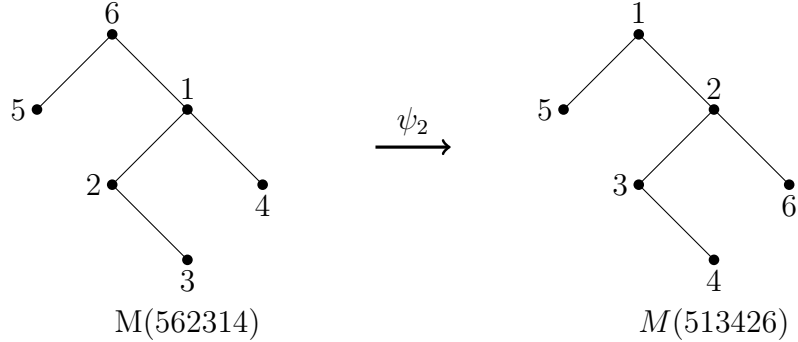
\begin{figure}
\begin{tikzpicture}
\draw (0,0)--(-1,-1);
\draw (0,0)--(1,-1)--(2,-2);
\draw (1,-1)--(0,-2)--(1,-3);
\fill (0,0) circle (2pt);
\node[above] at (0,0) {$6$};
\node[left] at (-1,-1) {$5$};
\node[above] at (1,-1) {$1$};
\node[left] at (0,-2) {$2$};
\node[below] at (1,-3) {$3$};
\node[below] at (2,-2) {$4$};
\fill (-1,-1) circle (2pt);
\fill (1,-1) circle (2pt);
\fill (2,-2) circle (2pt);
\fill (0,-2) circle (2pt);
\fill (1,-3) circle (2pt);
\node[below] at (1,-3.5) {M(562314)};
\draw[][->,very thick](3.5,-1.5)--(4.5,-1.5) node[midway, above]{$\psi_2$};
\draw (7,0)--(6,-1);
\draw (7,0)--(8,-1)--(9,-2);
\draw (8,-1)--(7,-2)--(8,-3);
\fill (7,0) circle (2pt);
\node[above] at (7,0) {$1$};
\node[left] at (6,-1) {$5$};
\node[above] at (8,-1) {$2$};
\node[left] at (7,-2) {$3$};
\node[below] at (8,-3) {$4$};
\node[below] at (9,-2) {$6$};
\fill (6,-1) circle (2pt);
\fill (8,-1) circle (2pt);
\fill (9,-2) circle (2pt);
\fill (7,-2) circle (2pt);
\fill (8,-3) circle (2pt);
\node[below] at (8,-3.5) {$M(513426)$};
\end{tikzpicture}
\caption{ The action of operator  $\psi_2$ at tree $M(562314)$.}\label{MNT}
\end{figure}


An interior vertex in $M(w)$ is called a \emph{min~(resp. max)} vertex if it is the minimum~(resp. maximum) label among all its descendants.
Let $M(w_i)~(resp.~M_l(w_i),M_r(w_i))$ denote the subtree~(resp. the left subtree, the right subtree) of  $M(w)$ with root $w_i$.

For $1\leq i\leq n$, we define the
 \emph{operator} 
$\psi_i$  permuting the labels of $M(w)$ as in the following.
\begin{enumerate}
\item 
If $w_i$ is a min vertex, then replace $w_i$ by the largest element of $M_r(w_i)$, permute the remaining elements of $M_r(w_i)$ such that they keep their same relative orders and all other vertices in $M(w)$ are fixed.
\item
If $w_i$ is a max vertex, then replace $w_i$ by the smallest element of $M_r(w_i)$ such that they keep their same relative order, and all other vertices in $M(w)$ are fixed.
\end{enumerate}
 An illustration of operator $\psi_2$  is  given  in Figure~\ref{MNT}.

Given a permutation  $\pi=\pi(1)\pi(2)\ldots\pi(n)$  
 of   $Y=\{y_1, y_2, \ldots,y_n\}_<$, which is a set of positive integers.
The $\pi(i)$-factorization of $\pi$ is  the sequence $(w_1,w_2,\pi(i),w_4,w_5)$,  $1\leq i\leq n$, where
\begin{itemize}
\item[(1)]
the concatenation product $w_1w_2\pi(i) w_4w_5$ is equal to $\pi$;
\item[(2)] $w_2$ is the longest right factor of $\pi(1)\pi(2)\ldots \pi(i-1)$, all letters of which are greater than $\pi(i)$;
\item[(3)] $w_4$ is the longest left factor of $\pi(i+1)\pi(i+2)\ldots \pi(n)$, all letters of which are greater than $\pi(i)$. 
\end{itemize}
Note that above any of $w_1$, $w_2$, $w_4$ or $w_5$ may be empty.

\begin{definition}[see \cite{FS76, FH16}]
A permutation $\pi\in\S_n$ is an \emph{Andr\'e  permutation} (of  kind I) if 
$\pi$ has no double descents and  ends with ascent, i.e., $\pi(n-1)<\pi(n)$, and 
if  $i\in\{2,\ldots,n\}$ is a valley of $\pi$ and $(w_1,w_2,\pi(i), w_4,w_5)$ is the 
 $\pi(i)$-factorization of $\pi$, then  the maximum letter of $w_2w_4$ is in  $w_4$.
\end{definition}
For example,  the Andr\'e permutations of length $4$ are 
$1234,\,1324,\,2314,\,2134$ and $3124$.

\begin{fact}
The  operators $\psi_i$  are commuting  
involutions acting on $M(w)$  and generate an abelien group $G_w$ isomorphic to  $(\Z/2\Z)^{l(w)}$, where $l(w)$ is the number of internal certices of $M(w)$. Those $\psi_i$ for which $w_i$ is an internal vertex are a minimal set $S_w$ of generators for $G_w$.
For any subset $S\subseteq G_w$ we define 
the HR action $\psi_S$
by 
$\psi_S(M(w))=\prod_{i\in S}\psi_i(M(w))$.
\end{fact}


For $\pi\in\S_n$, let $\mathrm{Orb}(\pi)$ be the set of permutations $w$ such that $M(w)$ is in 
 the orbit of  $M(\pi)$ under the HR-action. 
Thus, for  any $\pi\in\S_n$,  there is a unique permutation $\pi^{A}$ in $\mathrm{Orb}(\pi)$ such that all its interior vertices in 
$M(\pi^{A})$ are min 
vertices.

\begin{fact}\label{andré-in-tree}
A permutation $\pi\in\S_n$ is an Andr\'e permutation  if and only if all interior vertices of min-max tree $M(\pi)$ are min vertices.
\end{fact}
It follows that 
$\cup_{\pi\in\mathrm{And}_{n}}\mathrm{Orb(\pi)}=\S_n$, where 
$\And_n$ is the set of Andr\'e permutations 
in $\S_n$.
Let $\S_n^*$ (resp. $\mathrm{Orb}^*(\pi)$)  be the subset of permutations 
in $\S_n$ (resp. $\mathrm{Orb}(\pi)$) which have no even descents. 
By restriction on the permutations which have only odd-descents we have 
\begin{align}
\S_n^*=\cup_{\pi\in\mathrm{And}_n}
\mathrm{Orb}^*(\pi).
\end{align}

 For any subset $S\subseteq [n]$ and Andr\'e permutation $\pi$, since all interior vertices of $M(\pi)$ are min vertices, we have $\des\psi_S(M(\pi))\geq\des\pi$.

Let  $\pi\in\mathrm{And}_n$.
So all the interior vertices of  $M(\pi)$ are min vertices,
if $\pi(k)$ is a    valley  of $\pi$ with  the $\pi(k)$-factorization  $(w_1,w_2,\pi(k),w_4,w_5)$, then the position of the last letter of $w_2$ is a descent position, and the  HR action $\psi_k$ on $M(\pi)$ will shift the descent position $k-1$  to $k$, since the vertex in $M(\pi)$ corresponding to  $\pi(k)$ will be relabelled by the largest letter of its subtree, and all other vertices keep their same relative order.
Thus, the  HR action $\psi_S$ with 
$S$ being the set of indices of 
 odd-valley-positions in $\pi$ will evacuate all the even  descent
positions, and the total number of descents will remain the same, let 
 $\psi_S(\pi)=\pi'$, clearly $\pi'\in \mathrm{Orb}^*(\pi)$.
\begin{fact} For 
$\pi\in\mathrm{And}_{n}$ we have
$$
\mathrm{Orb}^*(\pi)=\mathrm{Orb}^*(\pi')=
\prod_{i\in S}(1+\psi_i)\{\pi'\},
$$
where $S$ is the set of odd ascent positions of $\pi'$.
Moreover, as $\des_0\psi_i(\pi')=\des_0(\pi')+1$, 
the folllowing identity  holds
\begin{align}
\sum_{\sigma\in 
\mathrm{Orb}^*(\pi)}p^{\des\sigma}=(1+p)^{\lfloor{n/2}\rfloor-\des\pi}p^{\des \pi}.
\end{align}
\end{fact}

Recall that $a(n,j)$  is 
the number of permutations in $\S_n$ with $j$ odd  descents and without even descents.
\begin{lem}\label{P}
 Let
$d(n,j)$ be the number of Andr\'e permutations in 
$\S_n$ with $j$ descents
for $0\leq 2j\leq n$. Then
\begin{align}\label{eq:s-d}
a(n,j)=\sum_{i=0}^j\binom{\lfloor{n/2}\rfloor-i}{j-i}d(n,i).
\end{align}
\end{lem}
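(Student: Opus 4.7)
The plan is to prove this identity combinatorially via the HR action on min-max trees, constructing for each André permutation $\pi$ with $\des\pi=i$ a bijection between the $\binom{\lfloor n/2\rfloor-i}{j-i}$ admissible ``flip sets'' $S$ of odd positions in $M(\pi)$ and the permutations $\sigma=\psi_S(\pi)\in\S_n$ with $\edes\sigma=0$ and $\odes\sigma=j$.

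I would first establish two structural observations about min-max trees. \textbf{(i)} \emph{Every interior node of $M(\sigma)$ has a right child}, because otherwise the root of its subtree would be forced to be simultaneously the first and the only extreme of a set of $\ge 2$ distinct values, which is impossible. \textbf{(ii)} By (i), an interior max node $\sigma(i)$ has its in-order successor $\sigma(i+1)$ inside its right subtree, so $\sigma(i)>\sigma(i+1)$ and position $i$ is a descent. In particular, if $\edes\sigma=0$ then every interior max node of $M(\sigma)$ lies at an odd position. Dually, by Lemma~\ref{HR} an André permutation $\pi$ has all interior nodes of min type, so (i) implies $\pi(i)<\pi(i+1)$ at every interior position, and the descents of $\pi$ are precisely the positions $i<n$ at which $\pi(i)$ is a leaf of $M(\pi)$---for such $i$ the in-order successor $\pi(i+1)$ is an interior, hence min, ancestor with a smaller value.

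For the bijection, let $L_o(\pi)$ and $L_e(\pi)$ be the sets of leaves of $M(\pi)$ at odd and even positions $<n$ respectively, so that $\des\pi=|L_o(\pi)|+|L_e(\pi)|$. Call a subset $S$ of interior odd positions of $M(\pi)$ \emph{admissible} if it contains $2k+1$ for every $2k\in L_e(\pi)$; such successor positions are automatically odd and interior by (i). For each admissible $S$, the permutation $\sigma_S:=\psi_S(\pi)$ has the same tree structure as $\pi$ but max type exactly at $S$, and a case-by-case inspection of each position (interior min, interior max in $S$, leaf at odd position, leaf at even position, and the final position) shows that $\edes\sigma_S=0$ and
\[
\odes\sigma_S=|S|+|L_o(\pi)|.
\]
Conversely, for any $\sigma$ with $\edes\sigma=0$ the set $T$ of interior max positions of $M(\sigma)$ lies in the odd positions by (ii), and $\psi_T(\sigma)$ is an André permutation for which $T$ is admissible. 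Since the $\psi_i$ are commuting involutions, the maps $(\pi,S)\mapsto\psi_S(\pi)$ and $\sigma\mapsto(\psi_T(\sigma),T)$ are mutually inverse.

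For the counting, among the $\lceil n/2\rceil$ odd positions of $[n]$ exactly $|L_o(\pi)|$ are leaves at positions $<n$, together with position $n$ itself if $n$ is odd (which is always a leaf); in either parity this leaves exactly $\lfloor n/2\rfloor-|L_o(\pi)|$ interior odd positions, and subtracting the $|L_e(\pi)|$ forced positions yields $\lfloor n/2\rfloor-\des\pi$ free choices. The identity $\odes\sigma_S=|S|+|L_o(\pi)|$ then translates $\odes\sigma_S=j$ into the requirement that $S$ contain exactly $j-i$ free positions, so there are $\binom{\lfloor n/2\rfloor-i}{j-i}$ admissible $S$ per André $\pi$ with $\des\pi=i$, and summing over $\pi$ gives the identity. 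The main technical obstacle will be the case analysis establishing $\odes\sigma_S=|S|+|L_o(\pi)|$: one must carefully control the effect of $\psi_S$ on the values at in-order successors of leaves in order to determine the sign of $\sigma_S(i+1)-\sigma_S(i)$ at each leaf position.
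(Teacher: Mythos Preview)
Your approach is correct and is essentially the paper's own argument, recast in tree-structural language: both construct the bijection via the Hetyei--Reiner action on the min-max tree of an Andr\'e permutation, and your ``forced'' odd interior positions $2k+1$ (successors of even leaves) are exactly the paper's odd valley positions, while your ``free'' odd interior positions are exactly the odd double-ascent positions of the intermediate permutation $\pi'$. The paper applies the HR operators in two stages (first at odd valleys to kill even descents, then at $j-i$ chosen odd double ascents), whereas you apply $\psi_S$ in one shot and give the inverse explicitly via the max-node set $T$; since the $\psi_i$ commute, the resulting maps coincide. Your anticipated case analysis for $\odes\sigma_S=|S|+|L_o(\pi)|$ goes through cleanly once you note that the in-order successor of a leaf is an ancestor whose min/max type in $\sigma_S$ is dictated solely by the parity of its position and the admissibility of $S$.
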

\begin{proof} Applying the above  facts 
\begin{align*}
\sum_{\sigma\in \S_n^*} p^{\des(\sigma)}
&=\sum_{\pi\in \And_n} \sum_{\sigma\in \textrm{Orb}^*(\pi)}p^{\des(\sigma)}\\
&=\sum_{\pi\in \And_n} (1+p)^{\lfloor{n/2}\rfloor-\des\pi}p^{\des \pi}.
\end{align*}
We derive \eqref{eq:s-d} by extracting the coefficient of $p^j$. 
 \end{proof}

\begin{lem} \label{link:a-d}
If  $\bar d(n,i)$ is the number of min-max trees on  $[n]$ having
$i$ max interior vertices with  two children,  then 
\begin{align}\label{peak-descent}
\bar d(n,i)=\sum_{j=i}^{\lfloor\frac{n}{2}\rfloor}\binom{j}{ i}d(n,j).
\end{align}
\end{lem}
\begin{proof}
If  $\pi\in \S_n$ is an André permutation, then  
the number of interior vertices with two children of $M(\pi)$ equals $\des(\pi)$.  Any permutation $\pi\in \S_n$ such that $M(\pi)$ has 
$i$ max interior vertices with two children can be obtained 
from the André permutation $\pi^A$ in $\mathrm{Orb}(\pi)$ by choosing
$i$ interior vertices with two children 
among  the  interior vertices with two children of $M(\pi^A)$ and  then applying 
 HR operator on these $i$ vertices (to transform them into max vertices). Hence, in each orbite of an André 
  min-max tree  (i.e., the tree $M(w)$ 
 associated to an André permutation $w$) with $j$ interior 
 vertices having two children, there are 
 $\binom{j}{i}$  min-max trees on $[n]$ having 
 $i$ max interior vertices with two children. 
 The result follows by summing 
 over  all the orbits. 
	\end{proof}

Recall   that
a permutation $w$ of $[n]$ is an \emph{Andr\'e  permutation of kind II} if, for $1\leq k\leq n$, 
\begin{enumerate}
\item the subsequence of the smallest $k$ elements in $w$ has no double descent  ;
\item the subsequence of the smallest $k$ elements in $w$ ends with an ascent.
\end{enumerate}
The permutation $w$ is called \emph{Simsun}  if it satisfies 
condition  (1), \cite{CS11, FH16, PZ21}.
 For example, 
the  five   Simsun 3-permutations are:  $231, 132, 312, 123, 213$ and
the five Andr\'e 4-permutations of the second kind are: 
$1234, \, 1423,\, 3124,\, 3412,\, 4123$. 

Actually,  the number of $n$-Andr\'e permutations and that of 
$(n-1)$-simsun permutations are both equal to 
the \emph{Euler number} 
$E_n$, which  can be  defined by
$$
\sum_{n\geq 0}E_n \frac{x^n}{n!}=\sec x+\tan x.
$$
Let $D_n(x)$ (resp. $rs_n(x)$)  
be the descent polynomial of  André permutations (resp. Simsun permutations) of length $n$.  By means of generating
function argument, Chow and Shiu \cite{CS11} proved  that the descent number is equidistributed over $(n-1)$-simsun permutations
and $n$-Andr\'e permutations, i.e.,
\begin{align}\label{rs-andré}
D_n(x)=rs_{n-1}(x)=\sum_{i=0}^{n-1} d(n,i)x^i\quad (n\geq 2)
\end{align}
with $D_1(x)=1$. 

Combining Theorem~\ref{thm-typeA} and Lemma~\ref{P}
we obtain an alternative proof of the following result of   H. Sun~\cite{Sun21}.
\begin{thm}\label{thm:sun}
Let
$d(n,j)$ be the number of Andr\'e permutations in 
$\S_n$ with $j$ descents
for $0\leq 2j\leq n$ and $\bar d(n,i)$ be 
 the number of min-max trees on  $n$ vertices having
$i$ max interior vertices with  two children, then 
\begin{subequations}
\begin{align}\label{gamma-sun}
\widetilde{A}_n(x,y)&=\sum_{j=0}^{\lfloor{n/2}\rfloor}d(n,j)(x+y)^j(1+x+y+xy)^{\lfloor\frac{n}{2}\rfloor-j},\\
\overline{A}_n(x,y)&=\sum_{i=0}^{\lfloor{n/2}\rfloor}(-1)^i\bar d(n,i)(x+y)^i(1+x+y+xy)^{\lfloor{n/2}\rfloor-i},
\label{gamma-alt}
\end{align}
\end{subequations}
and
$rs_{n-1}(1+x)=\sum_{i=0}^{\lfloor{n/2}\rfloor}\bar d(n,i)x^i$, where $rs_{n-1}(x)$ is the descent polynomial of Simsun permutations.
\end{thm}
\begin{proof}
Plugging \eqref{eq:s-d} in \eqref{sym-S} we obtain 
\begin{align*}
\widetilde{A}_n(x,y)
&=\sum_{j=0}^{\lfloor{n/2}\rfloor}\sum_{i=0}^{j}
\binom{\lfloor{n/2}\rfloor-i}{j-i}d(n,i) (x+y)^{j}(1+xy)^{\lfloor\frac{n}{2}\rfloor-j}\\
&=\sum_{i=0}^{\lfloor{n/2}\rfloor}d(n,i)(x+y)^{i}
\sum_{j\geq 0}\binom{\lfloor{n/2}\rfloor-i}{j}
(x+y)^{j}(1+xy)^{\lfloor{n}{2}\rfloor-i-j}\\
&=\sum_{i=0}^{\lfloor{n/2}\rfloor}d(n,i)
(x+y)^i (1+x+y+xy)^{\lfloor{n/2}\rfloor-i},
\end{align*}
which is  the right-hand side of \eqref{gamma-sun} upon
replacing $i$ by $j$.

By \eqref{tildeA:A} and \eqref{ST}
we have $\overline{A}_n(x,y)=y^{\lfloor n/2\rfloor}\widetilde{A}_n(x,1/y)$. 
Hence  
\begin{align*}
\overline{A}_n(x,y)&=\sum_{j=0}^{\lfloor{n/2}\rfloor}d(n,j)(1+xy)^j(1+x+y+xy)^{\lfloor{n/2}\rfloor-j}.
\end{align*}
Now, rewriting $(1+xy)^j$ in the last sum as
 $$
(1+xy)^j=\sum_{i=0}^j(-1)^i\binom{j}{ i}(1+x+y+xy)^{j-i} (x+y)^i,
$$
 we obtain the right-hand side of \eqref{gamma-alt}.
\end{proof}

\begin{remark}
Comparing \eqref{gamma-eulerianpoly} with  \cite[Theorem 2]{PZ21} we notice that  $d(n,j)$ is also the number of Andr\'e permutations of kind II of $[n]$ with $j$ descents. This result is implicit in \cite{FS71, FS76, FH16}.
If $x=y$, Theorem~\ref{thm:sun} plainly reduces to the classical $\gamma$-formula of Eulerian polynomials, see \cite[Theorem~1]{PZ21},
\begin{align}\label{gamma-eulerianpoly}
A_n(x,x)=\sum_{j=0}^{\lfloor{n/2}\rfloor}d(n,j)\, 2^j\, x^j (1+x)^{n-1-2j}.
\end{align} 
Also, Lin et al. \cite[Theorem 1.1]{LMWW22} proved the $x=y$ case of~\eqref{gamma-alt}.
\end{remark}
\subsection{Gamma-positivity  of bi-Eulerian polynomials of type B}
We refine  Petersen's proof of gamma-nonnegativity of type B Eulerian polynomials in \cite{Pe15}.
 
Given a permutation $u\in\S_n$ 
we denote  by $\B(u)$  
the set of all permutations $\omega\in\B(u)$
such that  $\omega(i)=\sigma_iu(i)$ with 
$\sigma_i\in\{-,+\}$ for $1\leq i\leq n$. 
Then we have the following observations:
\begin{itemize}
\item 
if $u(i-1)<u(i)$, then $\omega(i-1)>\omega(i)$ if and only if $\sigma_i=-$,
\item
if $u(i-1)>u(i)$, then $\omega(i-1)>\omega(i)$ if and only if $\sigma_{i-1}=+$.
\end{itemize}
To put it another way, the sign $\sigma_j$ controls the descent in position $j-1$ if and only if $j-1$ is \emph{not} a descent position of $u$, and it controls the descent in position $j$ if and only if $j$ \emph{is} a descent position of $u$.

Consider the example of $u=31472865$. Then there is a descent in position $0$ if and only if $\sigma_1=-$ while there is a descent in position $1$ if and only if $\sigma_1=+$. Since $u(2)=1$ is smaller than the elements on either side of it, the sign $\sigma_2$ has no effect whatever on the descent set. With $u(3)=4$, we find that $\omega(2)>\omega(3)$ if and only if $\sigma_3=-$, but that $\sigma_3$ does not control whether $\omega(3)$ is greater than $\omega(4)$ ($\sigma_4$ does that). By considering the sign of each letter in turn.

We summarize the above  consideration more precisely in  the following 
\begin{observation}\label{fact:f}
Let $u\in\S_n$. If   $\omega\in\B_n(u)$ with $\omega(j)=\sigma_ju(j)$, then  
\begin{itemize}
\item 
If $u(j-1)<u(j)>u(j+1)$, then $\sigma_j$ controls both the descent in position $j-1$ and position $j$. That is, if $\sigma_j=+$, then in $\omega$, $j-1$ is not a descent position, but $j$ is a descent position. If $\sigma_j=-$, then in $\omega$, $j-1$ is a descent position but $j$ is not. This means, $\sigma_j$ does not change the number of descents, but it controls the parity of descent position.
\item
If $u(j-1)<u(j)<u(j+1)$, then $\sigma_j$ controls the descent on position $j-1$, but no effect on position $j$. That is, if $\sigma_j=+$, then $j-1$ is not a descent position, if $\sigma_j=-$, then $j-1$ is a descent position.
\item
If $u(j-1)>u(j)>u(j+1)$, then $\sigma_j$ controls the descent on position $j$, but no effect on position $j-1$. That is, if  $\sigma_j=+$, then $j$ is a descent position, if $\sigma_j=-$, then $j$ is not a descent position.
\item
If $u(j-1)>u(j)<u(j+1)$, then $\sigma_j$ has no effect on the descent set.
\end{itemize}
\end{observation}
The \emph{number of left peaks} of  permutation $u\in\S_n$ is  defined by
\begin{align}\label{lpk}
\mathrm{lpk}(u)=|\{1\leq i<n:u(i-1)<u(i)>u(i+1)\}|,
\end{align}
 where $u(0)=0,\,u(n+1)=n+1$. 
\begin{lem}\label{L1}
If $\omega\in\B_n$ is a permutation with $j$ odd descents and without even descents, then $|\omega|$ is a permutation  in $\S_n$ with $\mathrm{lpk}(|\omega|)\leq j$.
\end{lem}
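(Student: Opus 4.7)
The plan is to construct an injection $\phi$ from the set of left peaks of $u := |\omega|$ into the set of odd descent positions of $\omega$; since $\odes\omega = j$, such an injection immediately yields $\mathrm{lpk}(u) \leq j$. Let $i$ be a left peak of $u$, so $1 \leq i < n$ and $u(i-1) < u(i) > u(i+1)$ (with $u(0) = 0 = \omega(0)$). The peak clause of the observations $F$ tells us that the sign $\sigma_i$ alone governs whether $\omega$ has a descent at position $i-1$ and whether it has one at position $i$, and does so in complementary fashion: $\sigma_i = +$ forces a descent at $i$ and no descent at $i-1$, while $\sigma_i = -$ forces a descent at $i-1$ and no descent at $i$.

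Exactly one of $\{i-1, i\}$ is even, so I would define $\phi(i)$ to be the odd member of this pair: $\phi(i) = i$ when $i$ is odd, and $\phi(i) = i-1$ when $i$ is even. I would then verify that $\phi(i)$ is always an odd descent position of $\omega$. If $i$ is odd, then $i-1$ is even, and the hypothesis that $\omega$ has no even descents forces $\sigma_i = +$; the peak clause then places a descent at $\phi(i) = i$. If $i$ is even, then $i$ itself is even, so no even descent at $i$ forces $\sigma_i = -$, and the peak clause places a descent at $\phi(i) = i-1$. The boundary case $i = 1$ is consistent with the convention $\omega(0) = 0$ already used in deriving $F$.

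Next I would prove that $\phi$ is injective. Both candidate images $i-1$ and $i$ are odd, so a collision $\phi(i_1) = \phi(i_2)$ with $i_1 < i_2$ is only possible if $i_2 = i_1 + 1$ with $i_1$ odd and $i_2$ even (giving $\phi(i_1) = i_1 = i_2 - 1 = \phi(i_2)$). But then the peak inequality at $i_1$ reads $u(i_1) > u(i_1+1) = u(i_2)$, while the peak inequality at $i_2$ reads $u(i_2-1) = u(i_1) < u(i_2)$, a contradiction. Hence $\phi$ is injective and the lemma follows.

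The main obstacle is essentially careful bookkeeping with the list $F$: one must correctly identify, in each parity case, which of the two neighboring descent positions is forbidden (even) and feed this into the complementary behavior of $\sigma_i$ at a peak. Once $\phi$ is defined the non-existence of collisions is a one-line check using only the two peak inequalities.
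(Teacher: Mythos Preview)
Your argument is correct and is in fact cleaner than the paper's. The paper first observes that the hypothesis forces $\omega(1)>0$ and that $\omega$ has no double descents, hence $\mathrm{lpk}(\omega)=j$; it then runs an iterative process, locating the leftmost valley of $\omega$ carrying a minus sign and replacing an entire segment by its absolute values, arguing at each step that the left-peak count does not increase, and eventually reaching $|\omega|$. Your approach bypasses this iteration entirely: using only the peak clause of the observations $F$, you send each left peak of $|\omega|$ to the unique odd member of $\{i-1,i\}$ and check that the no-even-descent hypothesis forces that position to be a descent of $\omega$; injectivity is immediate since consecutive positions cannot both be peaks. The paper's route has the mild advantage of exhibiting the intermediate inequality $\mathrm{lpk}(|\omega|)\le\mathrm{lpk}(\omega)$, but for the lemma as stated your direct injection is shorter and uses nothing beyond what the paper has already set up.
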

\begin{proof}
Since $\omega$ does not have descents on even positions,  we have $\omega(1)>0$ and $\omega$ does not have double descents. Suppose $\omega(i)$ is the first valley with $\sigma_i=-$ and $\omega(k)$ is the peak closest to $\omega(i)$ on the right. Then $\omega(i)\omega(i+1)\ldots\omega(k)$ is an increasing subsequence, and there has no peak in $|\omega(i)||\omega(i+1)|\ldots|\omega(k)|$. Let $\omega_0=\omega(1)\omega(2)\ldots\omega(i-1)|\omega(i)||\omega(i+1)|\ldots|\omega(k)|\omega(k+1)\ldots\omega(n)$ then, the difference of peak sets of $\omega_0$ and $\omega$ happens on $\omega(i-1)$, $|\omega(i)|$ and $|\omega(k)|$, $\omega(k+1)$. As it is not possible that both $\omega(i-1)$ and $|\omega(i)|$ are peaks in $\omega_0$ (but $\omega(i-1)$ is a peak in $\omega$). Since $|\omega(k)|\geq\omega(k)>\omega(k+1)$, so $|\omega(k)|$ is the only possible peak candidate of $|\omega(k)|$ and $\omega(k+1)$ in $\omega_0$ ($\omega(k)$ is a peak in $\omega$). In summary, we have $\mathrm{lpk}(\omega_0)\leq\mathrm{lpk}(\omega)$. We repeat this process on $\omega_0$, finally, we obtain $\mathrm{lpk}(|\omega|)\leq\mathrm{lpk}(\omega)=j.$
\end{proof}
\begin{lem}\label{L2}
Let $g(n,i)=|\{u\in\S_n:\lpk(u)=i\}|$. Then
$$
b(n,j)=\sum_{i=0}^j\binom{\lfloor{n/2}\rfloor-i}{j-i}g(n,i) 2^i.
$$
\end{lem}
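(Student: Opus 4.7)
The plan is to fix $u \in \S_n$ with $\lpk(u) = i$ and count the signings $\omega \in \B(u)$ producing exactly $j$ odd descents and no even descents, showing that this count equals $2^i \binom{\lfloor n/2\rfloor - i}{j-i}$ independently of $u$; summing over $u$ then gives the formula, since $g(n,i)=|\{u\in\S_n:\lpk(u)=i\}|$ by definition.

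First I would classify each position $k \in [n]$ by the shape of $u$ at $k$---peak (P), valley (V), double ascent (DA), or double descent (DD)---using the boundary convention $u(0)=0$, $u(n+1)=n+1$. Applying the observations $F$ case by case, together with the constraint $\sigma_1 = +$ forced by the absence of a descent at the even position $0$, one obtains:
\begin{itemize}
\item at a peak, $\sigma_k$ is forced by the parity of $k$ and produces exactly one odd descent (at $k$ if $k$ is odd, at $k-1$ if $k$ is even);
\item at a valley, $\sigma_k$ is free and produces no descent;
\item at a DA with $k$ odd, or a DD with $k$ even, $\sigma_k$ is forced and produces no descent;
\item at a DA with $k$ even, or a DD with $k$ odd, $\sigma_k$ is free, one value producing one extra odd descent and the other none.
\end{itemize}

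Next I would establish two combinatorial identities valid for every $u$ with $\lpk(u)=i$. First, $u$ has exactly $i$ valleys: with the convention $u(0)=0$, $u(n+1)=n+1$ the extended sequence begins and ends with an ascent, so its peaks and valleys alternate as $P,V,P,V,\ldots,P,V$. Second, writing $\mathrm{DA}_e(u)$ and $\mathrm{DD}_o(u)$ for the numbers of double ascents at even positions and of double descents at odd positions respectively, I would prove $\mathrm{DA}_e(u)+\mathrm{DD}_o(u)=\lfloor n/2\rfloor - i$ by a bijection between the odd integers in $\{0,1,\ldots,n-1\}$ (of which there are $\lfloor n/2\rfloor$) and the disjoint union of peaks with flexible DA/DD positions: an odd $p$ with $u(p)>u(p+1)$ is matched with a peak at $p$ or a DD at $p$ (both odd), while an odd $p$ with $u(p)<u(p+1)$ is matched with a peak at $p+1$ or a DA at $p+1$ (both even), the four alternatives being mutually exclusive and exhaustive.

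Combining the sign analysis with these two identities, for each $u$ with $\lpk(u)=i$ the number of admissible signings equals $2^i \binom{\lfloor n/2\rfloor - i}{j-i}$: the factor $2^i$ records the $i$ free valley signs, while the binomial coefficient records the choice of which $j-i$ of the $\lfloor n/2\rfloor - i$ flexible DA/DD positions contribute an extra odd descent. Summing over $u$ gives the claim. The main obstacle is the second identity $\mathrm{DA}_e+\mathrm{DD}_o=\lfloor n/2\rfloor-i$; once this bijection on odd positions is in place, everything else is a mechanical invocation of the observations $F$.
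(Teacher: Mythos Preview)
Your proof is correct and follows essentially the same strategy as the paper: fix $u\in\S_n$, use the observations $F$ to analyse which signs $\sigma_k$ are forced by the ``no even descents'' constraint and which remain free, and count.

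The execution differs in two respects worth noting. First, your systematic four-way case split (P/V/DA/DD crossed with parity of $k$) makes the count immediate once you know how many flexible positions there are, whereas the paper builds the signing in four sequential steps (``Process A'') and then has to argue separately, invoking Lemma~\ref{L1}, that every $\omega$ with $j$ odd descents and no even descents actually arises from $|\omega|$ via this process. Your direct count sidesteps that surjectivity argument entirely, so Lemma~\ref{L1} becomes unnecessary for you. Second, you isolate and prove explicitly the identity $\mathrm{DA}_e(u)+\mathrm{DD}_o(u)=\lfloor n/2\rfloor-i$ via a clean bijection on odd positions $p\in\{1,\ldots,n-1\}$; the paper uses the equivalent fact that its set $C$ has cardinality $\lfloor n/2\rfloor-i$ but does not justify it (and the passage through the intermediate signed permutation $\omega_2$ and its ``peak'' positions is somewhat opaque). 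Your bijection is a genuine clarification here. Overall the two proofs share the same skeleton, but yours is more self-contained.
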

\begin{proof}
Let $u$ be a permutation in $\S_n$ with $\mathrm{lpk}(u)=i\leq j$. We can use the following process to transform it
to  a permutation of $\B_n$ with $j$ odd descent and without even descents.\\
{\sc Process A}
\begin{enumerate}
\item 
Firstly, we sign the $i$ valleys of $u$ with either $-$ or $+$, which gives $\omega_1$.
\item
Secondly, in $\omega_1$, we sign the peaks at even positions with $-$, then we obtain $\omega_2$ with all the peaks at odd positions (by Remark~\ref{fact:f}). 
\item
Thirdly, choose a $j-i$ elements subset $D$ of $C:=\{1,3,\ldots,2\lfloor\frac{n}{2}\rfloor-1\}\setminus\mathrm{LPK}(\omega_2)$, where $\mathrm{LPK}(\omega_2)$ is the position set of peaks of $\omega_2$. For $l\in D$, if $\omega_2(l)$ is a descent then we do nothing with  $\omega_2(l)$, if $\omega_2(l)$ is an ascent then we sign $\omega_2(l+1)$ (\emph{it must be a double ascent in $u$}) with $-$. For $l\notin D$ but $l\in C$, if $\omega_2(l)$ is a descent then we sign $\omega_2(l)$ (\emph{it must be a double descent in $u$})with $-$, if $\omega_2(l)$ is an ascent, then we do nothing with $\omega_2(l)$, which gives  $\omega_3$.
\item
Lastly, in $\omega_3$ we sign all the double descents at even positions with $-$, which gives $\omega_4$. 
\end{enumerate}
By Observation~\ref{fact:f}, we see that $\omega_4$ is a permutation in $\B_n$ with $j$ odd descents and without even descents.

In this process, no letter in $u$ is repeatedly signed. And we can see that for a fixed $u\in\S_n$ with $i$ peaks, by Process~A, it can produce $\binom{\lfloor{n/2}\rfloor-i}{j-i}\cdot2^i$ different permutations in $\B_n$ with $j$ odd descents and without even descents. By Lemma~\ref{L1}, for $\omega\in\B_n$ with $j$ odd descents and without even descents, we have $|\mathrm{lpk}(|\omega|)|\leq j$ and by Remark~\ref{fact:f}, the descent positions in $\omega$ are totally controlled by the signs of peaks, double descents and double ascents of $|\omega|$, that is $\omega$ can be constructed by $|\omega|$ through Process~A. This completes the proof.
\end{proof}

\begin{thm}\label{thm-typeB-gamma}
Let $g(n,j)=|\{u\in\S_n:\mathrm{lpk}(u)=j\}|$. Then
\begin{align}\label{eq:U-gamma}
\widetilde{B}_n(x,y)&=\sum_{j=0}^{\lfloor\frac{n}{2}\rfloor}g(n,j)2^j (x+y)^j(1+x+y+xy)^{\lfloor\frac{n}{2}\rfloor-j},\\
\label{eq:V-gamma}
\overline{B}_n(x,y)&=\sum_{j=0}^{\lfloor\frac{n}{2}\rfloor}(-1)^j\bar{g}(n,j)2^j (x+y)^j(1+x+y+xy)^{\lfloor\frac{n}{2}\rfloor-j}
\end{align}
with
\begin{align}\label{eq:gbar-g}
\bar{g}(n,j)
=\sum_{i=0}^{\lfloor{n/2}\rfloor-j}
\binom{i+j}{j} g(n,i+j)2^i.
\end{align}
\end{thm}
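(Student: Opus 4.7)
The plan is to derive both gamma-expansions from the symmetric expansions of Theorem~\ref{TB1} together with the combinatorial identity of Lemma~\ref{L2}, in direct analogy with the proofs of Theorem~\ref{thm:sun} and Theorem~\ref{thm-gamma-alt}.

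First I would establish \eqref{eq:U-gamma}. Substituting the formula $b(n,j)=\sum_{i=0}^{j}\binom{\lfloor n/2\rfloor-i}{j-i} g(n,i)\,2^i$ from Lemma~\ref{L2} into the symmetric expansion \eqref{sym-U} of Theorem~\ref{TB1} yields
\begin{align*}
\widetilde{B}_n(x,y)
=\sum_{j=0}^{\lfloor n/2\rfloor}\!\sum_{i=0}^{j}\binom{\lfloor n/2\rfloor-i}{j-i} g(n,i)\,2^i(x+y)^j(1+xy)^{\lfloor n/2\rfloor-j}.
\end{align*}
Swapping the order of summation, pulling out $g(n,i)\,2^i(x+y)^i$, and setting $k=j-i$ reduces the inner sum to
$\sum_{k=0}^{\lfloor n/2\rfloor-i}\binom{\lfloor n/2\rfloor-i}{k}(x+y)^{k}(1+xy)^{\lfloor n/2\rfloor-i-k}=(1+x+y+xy)^{\lfloor n/2\rfloor-i}$ by the binomial theorem, giving \eqref{eq:U-gamma}.

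Next I would deduce \eqref{eq:V-gamma} from \eqref{eq:U-gamma} using a $y\mapsto 1/y$ substitution. A short case check on the defining equations \eqref{B1}, \eqref{B2} together with \eqref{BB} shows that
\begin{equation*}
\overline{B}_n(x,y)=y^{\lfloor n/2\rfloor}\,\widetilde{B}_n(x,1/y).
\end{equation*}
Applying this to \eqref{eq:U-gamma} and using that $1+x+1/y+x/y=(1+x)(1+y)/y$, the factor $y^{\lfloor n/2\rfloor}$ is absorbed so that
\begin{equation*}
\overline{B}_n(x,y)=\sum_{j=0}^{\lfloor n/2\rfloor} g(n,j)\,2^j (1+xy)^j(1+x+y+xy)^{\lfloor n/2\rfloor-j}.
\end{equation*}
Writing $1+xy=(1+x+y+xy)-(x+y)$ and expanding $(1+xy)^j$ by the binomial theorem, then swapping the order of summation, produces an expansion of the form $\sum_{i}(-1)^i c(n,i)(x+y)^i(1+x+y+xy)^{\lfloor n/2\rfloor-i}$ with
\begin{equation*}
c(n,i)=\sum_{j=i}^{\lfloor n/2\rfloor}\binom{j}{i} g(n,j)\,2^j.
\end{equation*}
Pulling out $2^i$ and re-indexing $j=i+k$ matches this with $\bar{g}(n,i)\,2^i$ as defined by \eqref{eq:gbar-g}, which completes the proof.

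The only non-routine step is the verification of the identity $\overline{B}_n(x,y)=y^{\lfloor n/2\rfloor}\widetilde{B}_n(x,1/y)$, and even that is just a two-line parity check using \eqref{BB}; all remaining manipulations are binomial-theorem rearrangements.
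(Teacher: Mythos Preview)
Your proposal is correct and follows essentially the same route as the paper's own proof: the paper also obtains \eqref{eq:U-gamma} by combining Theorem~\ref{TB1} with Lemma~\ref{L2} (exactly the binomial rearrangement you spell out), and then derives \eqref{eq:V-gamma} via the identity $\overline{B}_n(x,y)=y^{\lfloor n/2\rfloor}\widetilde{B}_n(x,1/y)$ followed by the same $(1+xy)=(1+x+y+xy)-(x+y)$ expansion you use. The paper is simply terser, referring back to the proof of Theorem~\ref{thm-gamma-alt} for the last step rather than writing it out.
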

\begin{proof}
By Theorem~\ref{TB1} and Lemma~\ref{L2}, we obtain \eqref{eq:U-gamma}.
To prove \eqref{eq:V-gamma},  
by~\eqref{chebikin-polynomial-B}, \eqref{B1} and~\eqref{B2}, we first note 
$$
\overline{B}_n(x,y)=y^{\lfloor\frac{n}{2}\rfloor}\widetilde{B}_n(x,1/y).
$$
It follows from \eqref{eq:U-gamma} that
\begin{align}\label{TBB}
\overline{B}_n(x,y)=\sum_{j=0}^{\lfloor\frac{n}{2}\rfloor}g(n,j)2^j\, (1+xy)^j(1+x+y+xy)^{\lfloor\frac{n}{2}\rfloor-j}.
\end{align}
The rest of the proof is the same as that of Eq.~\eqref{gamma-alt}, so it is omitted.
\end{proof}

\begin{remark} 
When $x=y$ identity \eqref{TBB} reduces to Proposition 10 in \cite{MFMY22}.
Identity \eqref{eq:gbar-g} is equivalent to the polynomial identity:
\begin{align}
\sum_{j=0}^{\lfloor n/2\rfloor} \bar{g}(n,j) x^j
&=\sum_{j=0}^{\lfloor n/2\rfloor}\sum_{i=0}^{\lfloor\frac{n}{2}\rfloor-j}
\binom{i+j}{j} g(n,i+j)2^ix^j\nonumber \\
&=\sum_{k=0}^{\lfloor n/2\rfloor}g(n,k) (2+x)^k.
\end{align}
If  $x=y$   identity~\eqref{eq:U-gamma} reduces  to Petersen's formula for type B Eulerian polynomial $B_n(x,x)$, see \cite[Theorem 13.5]{Pe15},  
\begin{align}
B_n(x,x)=\sum_{j=0}^{\lfloor n/2\rfloor}g(n,j) \,(4x)^j(1+x)^{n-2j},
\end{align}
and Eq. \eqref{eq:V-gamma} reduces to Ma et al.'s 
formula for type B alternating descent polynomials, see \cite[Theorem 12]{MFMY22}
\begin{align}
\widehat{B}_n(x,x)=\sum_{j=0}^{\lfloor\frac{n}{2}\rfloor} \bar{g}(n,j) (-4x)^j(1+x)^{n-2j}.
\end{align}

\end{remark}
\section{Counting  permutations of type A by the parity of 
 descent positions}
 If  $\sigma=\sigma_1\cdots\sigma_n$ is a permutation in $\S_n$, the descent set $\D(\sigma)$ of $\sigma$ is 
  $\D(\sigma)=\{i:\sigma_{i}>\sigma_{i+1}\}\subseteq[n-1]$. 
  We denote by  
  $\eD(\sigma)$ (resp. $\oD(\sigma)$) the  set of even (resp. 
  odd) descents of $\sigma$. For brevity we denote their cardinalities by  $\ed(\sigma)=|\eD(\sigma)|$ and  $\od(\sigma)=|\oD(\sigma)|$. 
    
Any subset $S=\{s_1,\ldots,s_k\}_<\subseteq[n-1]$ can be encoded by   the composition $\mathrm{co}(S):=(s_1,s_2-s_1,\cdots,s_k-s_{k-1},n-s_k)$ of $n$. Clearly this correspondence is a bijection.
For  any  composition $\lambda=(\lambda_1,\ldots,\lambda_l)$ of $n$, let $S_{\lambda}$ be the subset $\{\lambda_1,\lambda_1+\lambda_2,\ldots,\lambda_1+\cdots\lambda_{l-1}\}$ of $[n-1]$ and  
define the $q$-multinomial coefficient
$$
\binom{n}{\lambda}_q:=\binom{n}{\mathrm{co}(S_\lambda)}_q=\frac{n!_q}{\lambda_1!_q\cdots\lambda_l!_q}.
$$
For   any subset $S\subseteq[n-1]$, let $\Delta_n(S):=  \{\sigma\in \S_n\mid \D(\sigma)\subseteq S\}$ and
 $R_n(S)$ be the set of rearrangements of word $1^{\lambda_1}\ldots l^{\lambda_l}$, where  $\lambda_i=s_i-s_{i-1}$ for $i\in [l]$ with  $l=k+1$, $s_0=0$ and  $s_l=n$. There is a bijection $\psi:\sigma\mapsto w$  from $\Delta_n(S)$ to  $R_n(S)$ defined by $w(j)=i$ if $\sigma(j)\in \{\sigma(s_{i-1}+1), \ldots, \sigma(s_i)\}_<$ for $j\in [n]$ and 
 $i\in [l]$. Clearly  the number of inversions of $w$, i.e., 
 $|\{i<j\mid w(i)>w(j), i,j\in [n]\}|$,  is equal to $\inv\sigma$.
By a theorem of MacMahon (see \cite[p. 41]{An88}) we obtain the following known result (see \cite[p. 227]{EC1}).
\begin{lem}\label{key0}
Let $S=\{s_1,s_2, \ldots, s_k\}_<\subseteq [n-1]$ and $\alpha_n(S,q)=\sum_{\sigma\in \Delta_n(S)}q^{\mathrm{inv}\sigma}$ . Then
$$
\alpha_n(S,q)=\binom{n}{\mathrm{co}(S)}_q.
$$
\end{lem}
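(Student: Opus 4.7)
The plan is to deduce the lemma from MacMahon's classical identity on the inversion enumeration of multiset permutations, via the map $\psi$ sketched just before the statement. Concretely, I will verify that $\psi$ is an inversion-preserving bijection from $\Delta_n(S)$ onto $R_n(S)$, and then invoke the identity
$$
\sum_{w \in R_n(S)} q^{\inv w} = \binom{n}{\mathrm{co}(S)}_q,
$$
which is exactly MacMahon's theorem on the $q$-multinomial coefficient.

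Fix $S=\{s_1<\cdots<s_k\}$, set $s_0=0$, $s_{k+1}=n$, $l=k+1$ and $\lambda_i=s_i-s_{i-1}$. Any $\sigma\in\Delta_n(S)$ is increasing on each position-interval $I_i:=[s_{i-1}+1,s_i]$, so the value-set $V_i:=\{\sigma(s_{i-1}+1),\ldots,\sigma(s_i)\}$ has cardinality $\lambda_i$ and the $V_i$ partition $[n]$. The map $\psi$ sends $\sigma$ to the word $w$ of length $n$ with $w(k)=i$ iff $k\in V_i$; clearly $w\in R_n(S)$. Conversely, given $w\in R_n(S)$, one reads off the positions bearing each label $i$ in increasing order and declares them to be $\sigma(s_{i-1}+1)<\cdots<\sigma(s_i)$; this uniquely reconstructs $\sigma\in\Delta_n(S)$, so $\psi$ is a bijection.

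The inversion-preservation is the substantive step. An inversion $(j,j')$ of $\sigma$ (i.e.\ $j<j'$ and $\sigma(j)>\sigma(j')$) must have $j\in I_i$ and $j'\in I_{i'}$ with $i<i'$, since $\sigma$ is increasing on each $I_i$. Setting $a=\sigma(j')$ and $b=\sigma(j)$, we get $a<b$ and $w(a)=i'>i=w(b)$, so $(a,b)$ is an inversion of $w$. Conversely, given an inversion $(a,b)$ of $w$ with $a<b$ and $w(a)>w(b)$, unwinding the definition shows that the positions $j,j'$ satisfying $\sigma(j)=b$ and $\sigma(j')=a$ lie in $I_i$ and $I_{i'}$ with $i<i'$, yielding a matching inversion of $\sigma$. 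Thus $\psi$ matches inversions one-to-one, and $\inv\psi(\sigma)=\inv\sigma$.

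Combining the two halves gives $\alpha_n(S,q)=\sum_{w\in R_n(S)}q^{\inv w}=\binom{n}{\mathrm{co}(S)}_q$. The main obstacle is the inversion-matching argument in the previous paragraph: one must be sure no pair is double-counted or missed, which works precisely because within any fixed value-class $V_i$ the positions in $w$ labelled $i$ appear in the same relative order as the corresponding values in $\sigma$, ruling out spurious intra-block contributions. MacMahon's identity itself is standard and admits a short inductive proof via the $q$-Pascal recurrence obtained from extracting the letter $l$ (or equivalently the rightmost maximal-letter run) from a word in $R_n(S)$, so no genuinely new idea is required beyond the bijection $\psi$.
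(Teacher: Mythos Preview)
Your proof is correct and follows exactly the route the paper indicates: exhibit the inversion-preserving bijection $\psi:\Delta_n(S)\to R_n(S)$ and then invoke MacMahon's theorem $\sum_{w\in R_n(S)}q^{\inv w}=\binom{n}{\mathrm{co}(S)}_q$. Your explicit description of $\psi$ via $w(k)=i\iff k\in V_i$ (i.e.\ indexing $w$ by values of $\sigma$) is in fact the correct formulation; the paper's literal phrasing ``$w(j)=i$ if $\sigma(j)\in\{\sigma(s_{i-1}+1),\ldots,\sigma(s_i)\}$'' collapses to the constant word and is a slip, so your version is what was intended.
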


To prove \eqref{pz1} we need three  more  lemmas.
For convenience,  for any subset $S\subseteq \N$ let
$S_{\rm e}=S\cap 2\N$ and $S_{\rm o}=S\cap (2\N+1)$ be the subsets of even and odd integers of $S$, respectively.
For $n\in\mathbb{N}$, let $\mathrm{O}[n]$ (resp. $\mathrm{E}[n]$) be the collection of odd (resp. even) elements of $[n]$.
Consider   the polynomial 
\begin{align}\label{typeA+}
P_n(x,y,q):=\sum_{S\subseteq [n-1]}\alpha_{n}(S,q)x^{|S_o|}y^{|S_e|}.
\end{align}

\begin{lem}\label{key1}  For $n\geq 1$ we have 
\begin{align}\label{PA-link}
A_n(x,y,q)=(1-x)^{\lfloor \frac{n}{2}\rfloor}
(1-y)^{\lfloor \frac{n-1}{2}\rfloor}P_{n}\left(\frac{x}{1-x},\frac{y}{1-y},q\right).
\end{align} 
\end{lem}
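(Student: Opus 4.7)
The plan is to establish~\eqref{PA-link} by a standard sieve on the descent set, converting the ``descent set contained in $S$'' statistic $\alpha_n(S,q)$ into the ``descent set equal to $D$'' statistic, then recognizing the resulting sum as an evaluation of $A_n$. By the very definition of $\Delta_n(S)$, every $\sigma\in\S_n$ satisfies $\sigma\in\Delta_n(S)$ iff $\D(\sigma)\subseteq S$, so
\[
\alpha_n(S,q)=\sum_{D\subseteq S}\beta_n(D,q),\qquad \beta_n(D,q):=\sum_{\substack{\sigma\in\S_n\\ \D(\sigma)=D}}q^{\inv\sigma}.
\]
Substituting into the definition~\eqref{typeA+} of $P_n$ and swapping the order of summation gives
\[
P_n(x,y,q)=\sum_{D\subseteq[n-1]}\beta_n(D,q)\sum_{S\supseteq D}x^{|S_o|}y^{|S_e|}.
\]

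Next I would evaluate the inner sum. Writing $S=S_o\sqcup S_e$ and $D=D_o\sqcup D_e$, the condition $S\supseteq D$ decouples as $D_o\subseteq S_o\subseteq\mathrm{O}[n-1]$ and $D_e\subseteq S_e\subseteq\mathrm{E}[n-1]$. Since $|\mathrm{O}[n-1]|=\lfloor n/2\rfloor$ and $|\mathrm{E}[n-1]|=\lfloor(n-1)/2\rfloor$, the elementary identity $\sum_{D_o\subseteq T\subseteq\mathrm{O}[n-1]}x^{|T|}=x^{|D_o|}(1+x)^{\lfloor n/2\rfloor-|D_o|}$ (and its analogue for $y$) yields
\[
\sum_{S\supseteq D}x^{|S_o|}y^{|S_e|}=x^{|D_o|}(1+x)^{\lfloor n/2\rfloor-|D_o|}\;y^{|D_e|}(1+y)^{\lfloor (n-1)/2\rfloor-|D_e|}.
\]

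Since $|D_o|=\odes\sigma$ and $|D_e|=\edes\sigma$ whenever $D=\D(\sigma)$, collecting the $|D_o|$- and $|D_e|$-dependent factors inside the sum over $D$ recognizes the result as an evaluation of $A_n$:
\[
P_n(x,y,q)=(1+x)^{\lfloor n/2\rfloor}(1+y)^{\lfloor (n-1)/2\rfloor}\,A_n\!\left(\frac{x}{1+x},\,\frac{y}{1+y},\,q\right).
\]
Finally, the substitution $x\mapsto x/(1-x)$, $y\mapsto y/(1-y)$, combined with the identity $1+x/(1-x)=1/(1-x)$, rearranges this into~\eqref{PA-link}. There is no substantive obstacle; the argument is a routine sieve, and the only care needed is in the bookkeeping of $|\mathrm{O}[n-1]|$ and $|\mathrm{E}[n-1]|$ and in identifying $|D_o|$, $|D_e|$ with $\odes\sigma$, $\edes\sigma$.
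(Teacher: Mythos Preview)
Your proof is correct and follows essentially the same approach as the paper: both swap the order of summation in the definition of $P_n$, evaluate the inner sum $\sum_{S\supseteq D}x^{|S_o|}y^{|S_e|}$ by decoupling odd and even parts, recognize the result as $(1+x)^{\lfloor n/2\rfloor}(1+y)^{\lfloor (n-1)/2\rfloor}A_n\bigl(\tfrac{x}{1+x},\tfrac{y}{1+y},q\bigr)$, and then invert the substitution. The only cosmetic difference is that you introduce the intermediate $\beta_n(D,q)$ while the paper sums directly over $\sigma\in\S_n$.
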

\begin{proof}
By Lemma 2.1 we have 
\begin{align*}
P_n(x,y,q)
&=\sum_{\sigma\in\S_{n}}x^{\od(\sigma)}y^{\ed(\sigma)}q^{\inv(\sigma)}
\sum_{S\subseteq [n-1]\setminus D(\sigma)}x^{|S_o|}y^{|S_e|}\\
&=\sum_{\sigma\in\S_{n}}x^{\od(\sigma)}y^{\ed(\sigma)}q^{\inv(\sigma)}
(1+x)^{\lfloor \frac{n}{2}\rfloor-\od(\sigma)}(1+y)^{\lfloor \frac{n-1}{2}\rfloor-\ed(\sigma)}
\end{align*}
as there are $\lfloor \frac{n}{2}\rfloor-\od(\sigma)$ odd (resp. $\lfloor \frac{n-1}{2}\rfloor-\ed(\sigma)$ even) integers   in $[n-1]\setminus D(\sigma)$.
In other words, we can write $P_n(x,y,q)$ as 
\[
P_n(x,y,q)= (1+x)^{\lfloor \frac{n}{2}\rfloor}
(1+y)^{\lfloor \frac{n-1}{2}\rfloor}A_{n}\left(\frac{x}{1+x},\frac{y}{1+y},q\right),
\]
which is equivalent to \eqref{PA-link}.
\end{proof}

\begin{remark} Let $P_n(x)=\sum_{S\subseteq [n-1]}\alpha_n(S,1) x^{|S|}$. It is not diffucult to see that 
$$
P_n(x)=\sum_{k=0}^{n-1} (k+1)! S(n,k+1) x^k,
$$
where $S(n,k)$ denotes the  Stirling number of the second kind, i.e., 
the number of ways to partition a set of $n$ objects into  $k$  non-empty subsets (see \cite{EC1}). So, when $x=y$,
formula \eqref{PA-link} reduces to   the \emph{Frobenius formula}, see \cite{FS70},
\begin{align}
A_n(x)=\sum_{k=1}^n k!S(n,k) x^{k-1} (1-x)^{n-k}.
\end{align}
\end{remark}

\begin{lem}\label{key2} We have
\begin{align}
B(t, x):&=\sum_{n\geq1}P_{2n}(x,0,q)\frac{t^{2n}}{(2n)!_q}=\frac{(\cosh_q t-1)(1-x(\cosh_q t-1))+x\sinh_q^2 t}{1-x(\cosh_q t-1)},\label{eq:B}\\
C(t,x):&=\sum_{n\geq1}P_{2n-1}(x,0,q)\frac{t^{2n-1}}{(2n-1)!_q}=\frac{\sinh_q t}{1-x(\cosh_q t-1)}.\label{eq:C}
\end{align}
\end{lem}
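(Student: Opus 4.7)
The plan is to combine Lemma~\ref{key0} with the standard $q$-exponential factorization
\[
\binom{n}{\lambda_1, \ldots, \lambda_l}_q \frac{t^n}{n!_q} = \prod_{i=1}^l \frac{t^{\lambda_i}}{\lambda_i!_q}
\]
to express both generating series as sums involving the $q$-hyperbolic functions $\sinh_q$ and $\cosh_q$. First, I would observe that setting $y=0$ in \eqref{typeA+} forces $S_e = \emptyset$, so by Lemma~\ref{key0},
\[
P_n(x,0,q) = \sum_{S \subseteq \mathrm{O}[n-1]} \binom{n}{\co(S)}_q x^{|S|}.
\]

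Next, I would parametrize each $S = \{s_1 < \cdots < s_k\}$ consisting of odd elements by its composition $\co(S) = (\lambda_1, \ldots, \lambda_{k+1})$ of $n$. Since every $s_i$ is odd, $\lambda_1 = s_1$ is odd, $\lambda_j = s_j - s_{j-1}$ is even and at least $2$ for $2 \leq j \leq k$, and $\lambda_{k+1} = n - s_k$ has parity opposite to $n$. Hence, when $n = 2m-1$ the composition has the shape $(\mathrm{odd}, \mathrm{even}^k)$ for some $k \geq 0$ (with $k=0$ being the $S = \emptyset$ case), and when $n = 2m$ it has the shape $(\mathrm{odd}, \mathrm{even}^{k-1}, \mathrm{odd})$ for $k \geq 1$, or the single part $(2m)$ when $S = \emptyset$.

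Applying the $q$-exponential factorization, odd parts contribute generating function $\sinh_q t$, even parts of size $\geq 2$ contribute $\cosh_q t - 1$, and a geometric series over the repeated middle parts yields the factor $1/(1 - x(\cosh_q t - 1))$. Assembling these pieces:
\[
C(t,q) = \sinh_q t + \sinh_q t \cdot \frac{x(\cosh_q t-1)}{1 - x(\cosh_q t-1)} = \frac{\sinh_q t}{1 - x(\cosh_q t-1)},
\]
which is \eqref{eq:C}; and for $B(t,q)$, combining the $S = \emptyset$ contribution $\cosh_q t - 1$ with the $k \geq 1$ contribution $x \sinh_q^2 t / (1 - x(\cosh_q t - 1))$ gives
\[
B(t,q) = (\cosh_q t - 1) + \frac{x \sinh_q^2 t}{1 - x(\cosh_q t - 1)}.
\]

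The routine portion is the exponential factorization and the geometric summation. The main step requiring care will be collapsing the last display into the compact form on the right-hand side of \eqref{eq:B}: in the classical limit this follows from $\sinh^2 t = (\cosh t - 1)(\cosh t + 1)$, which reduces the numerator to $(1+2x)(\cosh_q t - 1)$. Verifying that the analogous collapse persists for the $q$-hyperbolic functions is the principal obstacle in this proof.
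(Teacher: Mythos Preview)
Your approach is identical to the paper's: both parametrize the subsets $S\subseteq\mathrm{O}[n-1]$ by their associated compositions, read off the parity pattern of the parts, and use the $q$-multinomial factorization to arrive at
\[
B(t,q)=\cosh_q t -1 + \frac{x\,\sinh_q^2 t}{1-x(\cosh_q t-1)}\quad\text{and}\quad C(t,q)=\frac{\sinh_q t}{1-x(\cosh_q t-1)}.
\]
The paper's proof also stops at this intermediate expression for $B(t,q)$ and simply asserts that it equals the closed form in \eqref{eq:B}.

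Your caution about that final collapse is warranted, and in fact the obstacle is real. The identity $\sinh_q^2 t=(\cosh_q t-1)(\cosh_q t+1)$ fails for $q\neq 1$: already the coefficients of $t^2$ disagree, namely $1$ versus $2/(1+q)$. A direct check confirms the discrepancy at the level of the lemma itself: one has $P_2(x,0,q)=\alpha_2(\emptyset,q)+\alpha_2(\{1\},q)\,x=1+(1+q)x$, whereas the closed form $\frac{(1+2x)(\cosh_q t-1)}{1-x(\cosh_q t-1)}$ yields $1+2x$ as the coefficient of $t^2/2!_q$. So the compact right-hand side of \eqref{eq:B} is correct only at $q=1$; the intermediate expression you (and the paper) obtained is the genuinely correct $q$-formula, and that is what must be fed into Lemma~\ref{key3}.
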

\begin{proof} There is a bijection between  the set of compositions $\gamma=(\gamma_1,\cdots,\gamma_l)$ of $2n$  such that $\gamma_1,\,\gamma_1+\gamma_2,\,\ldots,\,\gamma_1+\gamma_2+\cdots+\gamma_{l-1}$ are odd numbers and the set of subsets $S_\gamma$ of
$\mathrm{O}[2n]$. Hence
\begin{align*}
\sum_{n\geq1}P_{2n}(x,0,q)\frac{t^{2n}}{(2n)!_q}
&=\sum_{n\geq1}\left(\sum_{S\subseteq \mathrm{O}[2n]}\alpha_{2n}(S,q)
x^{|S|}\right)\frac{t^{2n}}{(2n)!_q}\\
&=\sum_{l\geq1}\left(\sum_{\gamma}\frac{t^{\gamma_1}}{\gamma_1!_q}\cdots\frac{t^{\gamma_l}}{\gamma_l!_q}\right)x^{l-1}\\
&=\sum_{i\geq1}\frac{t^{2i}}{2i!_q}+x\sum_{l\geq2}\left(\sum_{i\geq1}\frac{t^{2i-1}}{(2i-1)!_q} \right)^2\left(x\sum_{i\geq1}\frac{t^{2i}}{2i!_q}\right)^{l-2}\\
&=\cosh_q t-1
+\frac{x\sinh_q^2 t}{1-x(\cosh_q t-{1})},
\end{align*}
which gives \eqref{eq:B}.

In the same vein, wa have 
\begin{align*}
\sum_{n\geq1}P_{2n-1}(x,0,q)\frac{t^{2n-1}}{(2n-1)!_q}&=\sum_{n\geq 1}\sum_{S\subseteq \mathrm{O}[2n-1]}\alpha_{2n-1}(S,q)x^{|S|}\frac{t^{2n-1}}{(2n-1)!_q}\\
&=\sum_{l\geq1}\left(\sum_{\gamma}\frac{t^{\gamma_1}}{\gamma_1!_q}\cdots\frac{t^{\gamma_l}}{\gamma_l!_q}x^{l-1}\right)\\
&=\sum_{l\geq1}\left(\sum_{i\geq1}\frac{t^{2i-1}}{(2i-1)!_q} \right)\left(x\sum_{i\geq1}\frac{t^{2i}}{(2i)!_q}\right)^{l-1},
\end{align*}
which is clearly equal to \eqref{eq:C}.
\end{proof}
Next we generalize 
 \eqref{eq:B} and \eqref{eq:C} to  the general  $y$.
\begin{lem}\label{key3} We have 
\begin{align}
\sum_{n\geq1}P_{2n}(x,y,q)\frac{t^{2n}}{(2n)!_q}&=
\frac{B(t,x)}{1-yB(t,x)},\label{p12}\\
\sum_{n\geq1}P_{2n-1}(x,y,q)\frac{t^{2n-1}}{(2n-1)!_q}
&=\frac{C(t,x)}{1-yB(t,x)}.\label{p13}
\end{align}
\end{lem}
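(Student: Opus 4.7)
\textbf{Proof plan for Lemma~\ref{key3}.} My plan is to decompose each subset $S \subseteq [n-1]$ as a disjoint union $S = S_o \cup S_e$, use the even elements $S_e$ to cut $[n]$ into consecutive blocks, and argue that within each block the contribution of $S_o$ is an independent problem enumerated exactly by $P_{\bullet}(x,0,q)$ from Lemma~\ref{key2}. The refinement identity for $q$-multinomial coefficients will then turn the series into a product of block-generating functions, which by geometric summation over the number of even descents produces the rational functions $B(t,q)/(1-yB(t,q))$ and $C(t,q)/(1-yB(t,q))$.

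More precisely, I would write $S_e = \{2m_1 < \cdots < 2m_k\}$, set $m_0 := 0$ and $2m_{k+1} := n$, and form the blocks $B_i = [2m_{i-1}+1, 2m_i]$ of lengths $\lambda_i = 2(m_i - m_{i-1})$ for $i \leq k$ and $\lambda_{k+1} = n - 2m_k$. Every $B_i$ begins at the odd integer $2m_{i-1}+1$, so the translation $j \mapsto j - 2m_{i-1}$ maps the odd elements of $S$ inside $B_i$ bijectively onto a subset of the odd integers of $[\lambda_i - 1]$, which is exactly the indexing set of $P_{\lambda_i}(x,0,q)$. The first $k$ blocks have even length, while $\lambda_{k+1}$ inherits the parity of $n$.

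Next, applying the standard refinement
$$\binom{n}{\co(S)}_q = \binom{n}{\lambda_1,\ldots,\lambda_{k+1}}_q \prod_{i=1}^{k+1}\binom{\lambda_i}{\co(S_o \cap B_i - 2m_{i-1})}_q$$
and summing over $S_o$ with $S_e$ fixed yields
$$P_n(x,y,q) = \sum_{k\geq 0} y^k \sum_{\lambda_1+\cdots+\lambda_{k+1} = n} \binom{n}{\lambda_1,\ldots,\lambda_{k+1}}_q \prod_{i=1}^{k+1} P_{\lambda_i}(x,0,q),$$
subject to the parity constraints noted above. Dividing by $n!_q$ and summing on $t^n$ splits the $q$-multinomial into the factors $1/\lambda_i!_q$, so the blocks decouple multiplicatively. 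For $n = 2N$ every $\lambda_i = 2\mu_i$ with $\mu_i \geq 1$, producing $\sum_{k\geq 0} y^k B(t,q)^{k+1} = B(t,q)/(1 - yB(t,q))$; for $n = 2N-1$ the first $k$ blocks still contribute $B(t,q)^k$ but the last block now has odd length, producing $\sum_{k\geq 0} y^k B(t,q)^k C(t,q) = C(t,q)/(1 - yB(t,q))$.

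The hard part will be carefully verifying the parity/translation argument: one has to check block by block that an arbitrary $S_o$ compatible with the fixed $S_e$ corresponds, after translation, exactly to the odd-indexed data enumerated by $P_{\lambda_i}(x,0,q)$, and in particular that for the last block in the odd case the constraint $n \notin S$ is automatic (since the forbidden element would sit at an odd relative position). Once this correspondence is cleanly established, the $q$-multinomial refinement identity and the geometric sum are routine.
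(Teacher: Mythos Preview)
Your proposal is correct and follows essentially the same approach as the paper: both arguments fix the even part $S_e$ of $S\subseteq[n-1]$, use those even elements to cut $[n]$ into consecutive blocks of even length (with a final block of the same parity as $n$), observe that the odd elements of $S$ restricted to each block are, after an even shift, exactly the data enumerated by $P_{\lambda_i}(x,0,q)$, and then sum geometrically over the number of even cut points. Your write-up is in fact a bit more explicit than the paper's about the $q$-multinomial refinement step $\binom{n}{\co(S)}_q=\binom{n}{\lambda}_q\prod_i\binom{\lambda_i}{\co(S_o\cap B_i-2m_{i-1})}_q$, which the paper leaves implicit; one small quibble is that your closing remark about ``the constraint $n\notin S$'' is unnecessary, since $S\subseteq[n-1]$ by definition, so there is no extra condition to check for the last block.
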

\begin{proof} Consider
$$
P_n(x,y,q)
=\sum_{(\sigma, S)}x^{|S_o|}y^{|S_e|} q^{\inv \sigma}\qquad 
(\sigma\in \S_n\; \textrm{and}\; D(\sigma)\subseteq S\subseteq [n-1]).
$$
There is a bijection between the set of subsets $S$ of $[n-1]$ with fixed even integers 
$S_{\rm e}=\{m_1<\cdots< m_{l-1}\}\subset \mathrm{E}[n-1]$ and the set of sequences of compositions of $m_i-m_{i-1}$ with odd parts 
for $i\in [l]$
with  $m_0=0$ and $m_l=n$.  Let $\co(S_{\rm e})=(n_1, \ldots, n_l)$ be the corresponding composition of $n$. Then
\begin{align*}
\sum_{n\geq1}P_{2n}(x,y,q)\frac{t^{2n}}{(2n)!_q}&=\sum_{l\geq 1}
\prod_{i=1}^{l-1}
\left[\sum_{S_i\subseteq \mathrm{O}[2n_i]}
\alpha_{2n_i}(S_i,q)  x^{|S_i|}
\frac{t^{2n_i}}{(2n_i)!_q}y \right]\\
&\hspace{2cm}\times \left[\sum_{S_l\subseteq \mathrm{O}[2n_l]}\alpha_{2n_l}(S_l,q)x^{|S_l|}\frac{t^{2n_l}}{(2n_l)!_q}\right],
\end{align*}
which is equal to  $\sum_{l\geq1}y^{l-1}\cdot B(t,x)^l=\frac{B(t,x)}{1-yB(t,x)}$.

Similary, we have 
\begin{align*}
\sum_{n\geq1} P_{2n-1}(x,y,q)\frac{t^{2n-1}}{(2n-1)!_q}
&=\sum_{l\geq1}\prod_{i=1}^{l-1}
\left(\sum_{S_i\subseteq \mathrm{O}[2n_i]}\alpha_{2n_i}(S_i,q)
x^{|S_i|}\frac{t^{2n_i}}{(2n_i)!}y \right)\\
&\hspace{1cm}\times \left(\sum_{S_l\subseteq \mathrm{O}[2n_{l}-1]}\alpha_{2n_{l}-1}(S_{l},q)x^{|S_{l}|}\frac{t^{2n_{l}-1}}{(2n_{l}-1)!_q}\right),
\end{align*}
which  can be written as  $\sum_{l\geq1}y^{l-1}\cdot B(t,x)^{l-1}\cdot C(t,x)=\frac{C(t,x)}{1-yB(t,x)}$.
\end{proof}

We  obtain \eqref{pz1} by combining  Lemma~\ref{key1}, Lemma~\ref{key2}  and Lemma~\ref{key3}.
\section{Counting  permutations of type B by the parity of  descent positions}
Let $\B_n^+$ (resp. $\B_n^-$) be the subset of permutations in 
 $\B_n$ whose first entry is positive (resp. negative). 
Clearly the doubleton  $\{\mathcal{B}_n^-, \mathcal{B}_n^+\}$ is a partition of $\B_n$. Introduce the corresponding enumerative polynomials:
\begin{align*}
{B}_n^-(x,y)=\sum_{\sigma\in \B_n^-} x^{\odes\sigma}y^{\edes\sigma},\quad {B}_n^+(x,y)=\sum_{\sigma\in\mathcal{B}_n^+}x^{\odes\sigma}y^{\edes\sigma}.
\end{align*}
Then $
{B}_n(x,y)={B}_n^-(x,y)+{B}_n^+(x,y)$.


For $\tau\in\B_n$, let $\tau^-$ be the permutation in $\B_n$
such that  $\tau^-(i)=-\tau(i)$ for $i\in [n]$. 
It is clear that the mapping $\rho: \tau\longmapsto\tau^-$ is an involution on $\mathcal{B}_n$ such that 
\begin{align}
\begin{split}
 \odes\tau+\odes\tau^-&=\lfloor n/2\rfloor,\\
 \edes\tau+\edes\tau^-&=\lfloor (n+1)/2\rfloor.
\end{split}
\end{align}
Besides,  the restriction of $\rho$ on $\B^+_n$ sets up a bijection
 $\rho: \B^+_n\to \B^-_n$, 
therefore
\begin{align}
\label{linkBQeven}
B_{n}^-(x,y)&=x^{\lfloor n/2\rfloor}y^{\lfloor (n+1)/2\rfloor} 
B_{n}^+\left(1/x,1/y\right).
\end{align}
So, we need only to compute the exponential generating functions of $B_{n}^+(x,y)$.

For $\sigma\in B_n$, we denote by $D(\sigma)$  the set of descents of $\sigma$.  If $S$ is  a  subset of $[n-1]$  let $\alpha^+_n(S)$ be the number of permutations $\sigma\in \B^+_n$ such that  
$D(\sigma)\subseteq S$.
 A set composition of set $\Omega$  is an $\ell$-tuple $(\Omega_1, \ldots, \Omega_\ell)$ of subsets of   $\Omega$ such that  
$\{\Omega_1, \ldots, \Omega_\ell\}$ is a set partition of  $\Omega$.

\begin{lem} Let $S=\{s_1<\cdots<s_k\}\subseteq[n-1]$, and $s_0=0$ and $s_{k+1}=n$. Then
\begin{align}
\alpha^+_n(S)=\binom{n}{\mathrm{co}(S)} 2^{n-s_1}.
\end{align}
\end{lem}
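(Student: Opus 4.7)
The plan is to argue bijectively, leveraging the fact that $D(\sigma)\subseteq S$ forces $\sigma$ to be increasing on each of the blocks $B_0=[1,s_1],\,B_1=[s_1+1,s_2],\,\ldots,\,B_k=[s_k+1,n]$, together with the convention $\sigma(0)=0$.

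First I would record that, under the type-B descent convention, $\sigma\in\B_n^+$ is equivalent to $0\notin D(\sigma)$. Hence, since $0\notin S$, the condition $\sigma\in\B_n^+$ with $D(\sigma)\subseteq S$ is the single condition that $\sigma$ is increasing on each block $B_i$, namely
$0<\sigma(1)<\cdots<\sigma(s_1)$ on $B_0$, and $\sigma(s_i+1)<\cdots<\sigma(s_{i+1})$ on $B_i$ for $i\geq 1$.

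Next I would decompose the construction of such a $\sigma$ into two independent steps. Step one: choose a set composition $(A_0,A_1,\ldots,A_k)$ of $[n]$ with $|A_i|=s_{i+1}-s_i$, specifying which absolute values $\{|\sigma(j)|:j\in B_i\}$ occupy each block. The number of such set compositions is $\binom{n}{\co(S)}$. Step two: choose the signs. On the first block $B_0$, the requirement $0<\sigma(1)<\cdots<\sigma(s_1)$ forces every entry to be positive, so there is a unique admissible signed arrangement once $A_0$ is fixed. On each later block $B_i$ ($i\geq 1$), I would argue that assigning an arbitrary sign pattern $\epsilon\in\{+,-\}^{|A_i|}$ to the elements of $A_i$ produces a unique signed set, whose elements arranged in increasing order furnish the unique admissible filling of $B_i$; conversely, any admissible signed filling arises this way by reading off its signs. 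This gives $2^{|A_i|}$ choices per block, and multiplying over $i\geq 1$ yields $2^{|A_1|+\cdots+|A_k|}=2^{n-s_1}$.

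Combining the two steps gives $\alpha_n^+(S)=\binom{n}{\co(S)}2^{n-s_1}$. The only step needing a brief check is the injectivity claim for sign patterns on blocks beyond $B_0$: distinct sign patterns $\epsilon\neq\epsilon'$ on the same $A_i$ yield distinct multisets of signed integers (because $A_i$ consists of distinct positive integers, so flipping any sign changes the underlying set), and therefore distinct increasing arrangements. This is the main (minor) obstacle, and once it is dispatched the counting is immediate.
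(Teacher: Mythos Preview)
Your proof is correct and follows essentially the same approach as the paper: both construct $\sigma$ by first choosing a set composition of $[n]$ into blocks of sizes $s_{i+1}-s_i$ (the multinomial factor), then freely signing all elements in blocks $\Omega_2,\ldots,\Omega_{k+1}$ while leaving the first block positive (the factor $2^{n-s_1}$), and finally arranging each block increasingly. Your write-up is in fact a bit more careful than the paper's, which omits the verification that the first block is forced to be entirely positive and that distinct sign patterns yield distinct permutations.
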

\begin{proof}
We can construct the  permutations $\sigma\in\B^+_n$ with  
  $D(\sigma)\subseteq S$ as in the following:
  \begin{itemize}
\item   partition $[n]$ to obtain a set-composition $(\Omega_1, \ldots, \Omega_{k+1})$ of $[n]$ with $|\Omega_i|=s_i$ for $1\leq i\leq k$ and $|\Omega_{k+1}|=n-s_k$,
 \item  sign the elements in $\Omega_i$ by $\epsilon\in \{-1, 1\}$ for $i=2,\ldots k+1$. 
 \item arrange the elements in each block $\Omega_i$ increasingly.
 \end{itemize} 
  It is clear that the number of such permutations is
%
%
$$
 \binom{n}{s_1-s_0,s_2-s_1,\ldots,s_{k+1}-s_k}\, 2^{n-s_1}.
$$
This is the desired formula.
\end{proof}
Similar to permutations of type A (see \eqref{typeA+}), consider the polynomial
\begin{align}\label{typeB+}
Q^+_n(x,y)=\sum_{S\subseteq [n]}\alpha^+_{n}(S)x^{|S_o|}y^{|S_e|}.
\end{align}

\begin{lem}\label{lem:BQlink} We have 
\begin{align}
B_{2n}^+(x,y)&=(1-x)^n(1-y)^{n-1}Q^+_{2n}\left(\frac{x}{1-x}, 
\frac{y}{1-y}\right),\label{evenQ+}\\
B_{2n-1}^+(x,y)&=(1-x)^{n-1}(1-y)^{n-1}Q^+_{2n-1}\left(\frac{x}{1-x}, 
\frac{y}{1-y}\right).\label{oddQ+}
\end{align}
\end{lem}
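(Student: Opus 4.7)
The plan is to mirror the proof of Lemma~\ref{key1}, swapping the order of summation in the definition of $Q_n^+(x,y)$. Since $\alpha_n^+(S)$ by construction enumerates those $\sigma\in\mathcal{B}_n^+$ whose descent set is contained in $S$, I would rewrite
\begin{align*}
Q_n^+(x,y)=\sum_{\sigma\in\mathcal{B}_n^+}\;\sum_{D(\sigma)\subseteq S\subseteq[n-1]} x^{|S_o|}y^{|S_e|}.
\end{align*}
For a fixed $\sigma$, the inner sum factorizes as an independent choice, over each $i\in[n-1]\setminus D(\sigma)$, of whether to include $i$ in $S$: each odd such $i$ contributes a factor $1+x$ and each even such $i$ a factor $1+y$, while the forced descent positions contribute $x^{\od(\sigma)}y^{\ed(\sigma)}$.

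Next I would count that $[n-1]$ contains exactly $\lfloor n/2\rfloor$ odd integers and exactly $\lfloor (n-1)/2\rfloor$ even integers. Combining this with the previous step yields
\begin{align*}
Q_n^+(x,y)=(1+x)^{\lfloor n/2\rfloor}(1+y)^{\lfloor(n-1)/2\rfloor}\,B_n^+\!\left(\tfrac{x}{1+x},\tfrac{y}{1+y}\right).
\end{align*}
I would then substitute $x\mapsto x/(1-x)$ and $y\mapsto y/(1-y)$, under which $1+x$ becomes $1/(1-x)$ and $1+y$ becomes $1/(1-y)$, and solve for $B_n^+$ to obtain
\begin{align*}
B_n^+(x,y)=(1-x)^{\lfloor n/2\rfloor}(1-y)^{\lfloor(n-1)/2\rfloor}\,Q_n^+\!\left(\tfrac{x}{1-x},\tfrac{y}{1-y}\right).
\end{align*}
Specializing $n=2m$ gives exponents $(m,m-1)$, which is \eqref{evenQ+}; specializing $n=2m-1$ gives exponents $(m-1,m-1)$, which is \eqref{oddQ+}.

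The argument is essentially algebraic bookkeeping and I do not anticipate a genuine obstacle. The one step requiring care is the parity-dependent count of odd versus even elements of $[n-1]$, which is precisely what produces the asymmetric exponents between \eqref{evenQ+} and \eqref{oddQ+} and forces the two cases to be stated separately.
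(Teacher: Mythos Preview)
Your proposal is correct and follows essentially the same approach as the paper: swap the order of summation in the definition of $Q_n^+$, factor the inner sum over $S\supseteq D(\sigma)$ according to parity, and then invert the substitution $x\mapsto x/(1+x)$, $y\mapsto y/(1+y)$. The only cosmetic difference is that the paper treats the even and odd cases separately from the outset, whereas you carry along the floor functions $\lfloor n/2\rfloor$ and $\lfloor (n-1)/2\rfloor$ and specialize at the end; both lead to the same exponents.
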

\begin{proof}
For even index we have
\begin{align}
Q^+_{2n}(x,y)
=\sum_{\sigma\in\B_{2n}^+}
\sum_{S\subseteq [2n]}
\sum_{\substack{\eD(\sigma)\subseteq S_e\\ \oD(\sigma)\subseteq S_o}}x^{|S_o|}y^{|S_e|}\label{eq:evenQ+}.
\end{align}
Now, for any fixed $\sigma\in\B_{2n}^+$,  writing
$T_0=S_e\setminus \eD(\sigma)$ and $T_1=S_e\setminus \oD(\sigma)$, then 
$|S_e|= \ed(\sigma)+|T_0|$ and $|S_o|= \od(\sigma)+|T_1|$; hence
the inner double sum at the right-hand side of~\eqref{eq:evenQ+} 
is a sum over the pairs $(T_0, T_1)$ such that
$T_0\subseteq \mathrm{E}[2n]$ and $T_1\subseteq \mathrm{O}[2n]$, 
and thus  equal to
\begin{align}
y^{\ed(\sigma)}x^{\od(\sigma)}(1+y)^{n-1-\ed(\sigma)}(1+x)^{n-\od(\sigma)}.\label{bp2}
\end{align}
Therefore
\begin{align}
Q_{2n}^+(x,y)=
(1+x)^n(1+y)^{n-1}B_{2n}^+\left(\frac{x}{1+x},\frac{y}{1+y}\right),
\end{align}
which is equivelent to \eqref{evenQ+}.

For odd index, a similar reasoning can be applied with regard to the sum
\begin{align}
Q_{2n-1}^+(x,y)&=\sum_{\sigma\in\B_{2n-1}^+}
\sum_{S\subseteq [2n-1]}
\sum_{\substack{\eD(\sigma)\subseteq S_e\\ \oD(\sigma)\subseteq S_o}}x^{|S_o|}y^{|S_e|}\label{bp1}
\end{align}
and leads to the formula
\begin{align}
Q_{2n-1}^+(x,y)=
(1+x)^{n-1}(1+y)^{n-1}B_{2n-1}^+\left(\frac{x}{1+x},\frac{y}{1+y}\right).\label{obp3}
\end{align}
which is equivalent to \eqref{oddQ+}.
\end{proof}

\begin{lem}
We have 
\begin{align}\label{eq:G}
G:=\sum_{n\geq 1} Q^+_{2n}(x,0)\frac{t^{2n}}{(2n)!}&=\cosh(t)-1+
\frac{x\sinh(t)\sinh(2t)}{1-x(\cosh (2t)-1)},\\
\intertext{and} \label{eq:H}
H:=\sum_{n\geq1}\sum_{S\subseteq \mathrm{O}[2n]}
\binom{2n}{\co(S)}2^{2n}x^{|S|}\frac{t^{2n}}{(2n)!}&=\cosh(2t)-1+
\frac{x\sinh^2(2t)}{1-x(\cosh (2t)-1)}.
\end{align}
\end{lem}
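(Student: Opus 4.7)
The plan is to prove \eqref{eq:G} and \eqref{eq:H} in parallel, since they arise from the same combinatorial decomposition and differ only in how the powers of $2$ are distributed across the parts of a composition. Setting $y=0$ in the definition \eqref{typeB+} restricts $S$ to a subset of $\mathrm{O}[2n] = \{1,3,\ldots,2n-1\}$, so I only need to understand the compositions $\co(S) = (\gamma_1,\ldots,\gamma_{k+1})$ of $2n$ arising from such subsets $S = \{s_1 < \cdots < s_k\}$. The first step is a parity check: $\gamma_1 = s_1$ is odd; each middle $\gamma_i = s_i - s_{i-1}$ (for $2 \leq i \leq k$) is even, being a difference of two odd integers; and $\gamma_{k+1} = 2n - s_k$ is odd, being even minus odd. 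Conversely, every composition of $2n$ with this parity pattern corresponds to a unique $S \subseteq \mathrm{O}[2n]$.

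The next step is to substitute the formula $\alpha^+_{2n}(S) = \binom{2n}{\co(S)}\,2^{2n-s_1}$ from the preceding lemma and use the identity
\[
\frac{t^{2n}}{(2n)!}\binom{2n}{\gamma_1,\ldots,\gamma_{k+1}}\,2^{2n-\gamma_1}
= \frac{t^{\gamma_1}}{\gamma_1!}\prod_{i=2}^{k+1}\frac{(2t)^{\gamma_i}}{\gamma_i!},
\]
so that the sum over compositions factorizes as a product of three exponential series. Summing each $\gamma_i$ subject to its parity constraint yields, for each $k \geq 1$, the factorization
\[
\Bigl(\sum_{\gamma\text{ odd}}\tfrac{t^{\gamma}}{\gamma!}\Bigr)\,\Bigl(\sum_{\gamma\text{ even}\geq 2}\tfrac{(2t)^{\gamma}}{\gamma!}\Bigr)^{k-1}\,\Bigl(\sum_{\gamma\text{ odd}}\tfrac{(2t)^{\gamma}}{\gamma!}\Bigr) = \sinh(t)\,(\cosh(2t)-1)^{k-1}\,\sinh(2t).
\]
Separating the $k=0$ contribution (the empty $S$, giving $\sum_{n\geq 1}\frac{t^{2n}}{(2n)!} = \cosh(t)-1$) and summing the resulting geometric series in $x(\cosh(2t)-1)$ over $k \geq 1$ delivers the right-hand side of \eqref{eq:G}.

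The argument for \eqref{eq:H} is identical in structure. The only change is that the weight $2^{2n}$ (in place of $2^{2n-s_1}$) redistributes a factor of $2$ onto \emph{every} part of the composition, so the first-part series upgrades from $\sinh(t)$ to $\sinh(2t)$, while the $k=0$ contribution becomes $\cosh(2t)-1$. The same factorization and geometric summation then produce the stated formula. I do not foresee any substantive obstacle; the only points requiring care are the bookkeeping of the extremal cases $k=0$ (empty $S$, with the convention from the preceding lemma giving the single composition $(2n)$) and $k=1$ (empty middle product), plus the elementary verification that $\sum_{\gamma\geq 2,\,\text{even}}(2t)^\gamma/\gamma! = \cosh(2t)-1$ so that the geometric series has the claimed common ratio.
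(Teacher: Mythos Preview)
Your proposal is correct and follows essentially the same approach as the paper: both arguments identify the bijection between subsets $S\subseteq\mathrm{O}[2n]$ and compositions of $2n$ with first and last parts odd and interior parts even, factorize the multinomial weight into a product of exponential series in $t$ or $2t$ according to the $2^{2n-\gamma_1}$ (resp.\ $2^{2n}$) factor, separate the $|S|=0$ term, and sum the resulting geometric series. Your parity description is in fact cleaner than the paper's written one (which contains a minor slip in stating that $\gamma_i$ is even for all $i\geq 2$, though its displayed computation is correct).
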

\begin{proof} By definition, if
$S=\{s_1, s_2,\ldots, s_{l-1}\}_<\subseteq \mathrm{O}[2n]$, let
$\gamma_1=s_1$, $\gamma_i=s_i-s_{i-1}$ for $i=2,\ldots, l$ with $s_l=2n-1$, then $\gamma_1$ is odd and $\gamma_i$ are even for $i=2,\ldots, l$. Therefore
\begin{align}
G&=\sum_{n\geq1}\sum_{S\subseteq \mathrm{O}[2n]}
\binom{2n}{\co(S)}_{B^+}x^{|S|}\frac{t^{2n}}{(2n)!}\nonumber\\
\label{bp6}
&=\sum_{n\geq1}\left(\sum_{\gamma}\frac{1}{\gamma_1!}\cdots\frac{1}{\gamma_l!}x^{l-1}\right)2^{2n-\gamma_1}t^{2n}\\
&=\sum_{i\geq1}\frac{t^{2i}}{(2i)!}+\sum_{l\geq2}\left(\sum_{i\geq1}\frac{(t)^{2i-1}}{(2i-1)!} \right)\left(x\sum_{i\geq1}\frac{(2t)^{2i-1}}{(2i-1)!} 
\right)\left(x\sum_{i\geq1}\frac{(2t)^{2i}}{(2i)!}\right)^{l-2}\nonumber\\
&=\sum_{i\geq1}\frac{t^{2i}}{(2i)!}+x\left(\sum_{i\geq1}\frac{t^{2i-1}}{(2i-1)!}\right)\left(\sum_{i\geq1}\frac{(2t)^{2i-1}}{(2i-1)!}\right)\frac{1}{1-x\sum_{i\geq1}\frac{(2t)^{2i}}{(2i)!}},\nonumber
\end{align}
which is the right-hand side of \eqref{eq:G}. 
Next,
\begin{align}
H
\label{bp7}
&=\sum_{n\geq1}\left(\sum_{\gamma}\frac{1}{\gamma_1!}\cdots\frac{1}{\gamma_l!}x^{l-1}\right)(2t)^{2n}\\
&=\sum_{i\geq1}\frac{(2t)^{2i}}{(2i)!}+\frac{1}{x}\sum_{l\geq2}\left(x\sum_{i\geq1}\frac{(2t)^{2i-1}}{(2i-1)!} \right)^2\left(x\sum_{i\geq1}\frac{(2t)^{2i}}{(2i)!}\right)^{l-2}\nonumber\\
&=\sum_{i\geq1}\frac{(2t)^{2i}}{(2i)!}+x\left(\sum_{i\geq1}\frac{(2t)^{2i-1}}{(2i-1)!}\right)^2\frac{1}{1-x\sum_{i\geq1}\frac{(2t)^{2i}}{(2i)!}},\nonumber
\end{align}
which is the right-hand of \eqref{eq:H}.
\end{proof}

\begin{lem} We have
\begin{align}
F&:=\sum_{n\geq1}\left(\sum_{S\subseteq \mathrm{O}[2n-1]}
\binom{2n-1}{\co(S)}x^{|S|}\right)2^{2n-1}\frac{t^{2n-1}}{(2n-1)!}=\frac{\sinh(2t)}{1-x(\cosh(2t)-1)},
\label{def:F}\\
L&:=\sum_{n\geq1}\left(\sum_{S\subseteq 
\mathrm{O}[2n-1]}\binom{2n-1}{\co(S)}_B x^{|S|}\right)\frac{t^{2n-1}}{(2n-1)!}=\frac{\sinh(t)}{1-x(\cosh(2t)-1)}.
\label{def:Fbar}
\end{align}
\end{lem}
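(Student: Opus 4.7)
The plan is to mimic verbatim the generating function computation already carried out for $G$ and $H$ in the previous lemma. The key structural observation is that if $S=\{s_1<s_2<\cdots<s_{l-1}\}\subseteq \mathrm{O}[2n-1]$ and we set $s_0=0$, $s_l=2n-1$, then $\co(S)=(\gamma_1,\ldots,\gamma_l)$ satisfies $\gamma_1=s_1$ odd and $\gamma_i=s_i-s_{i-1}$ even for $2\leq i\leq l$ (since both $s_i$ and $s_{i-1}$ are odd, and $2n-1$ is odd too). Thus the compositions appearing in $F$ and $L$ are exactly those with first part odd and all subsequent parts positive even.

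I would first handle $F$. Grouping terms by the number $l$ of parts, using $|S|=l-1$ and absorbing the $2^{2n-1}$ factor together with $t^{2n-1}$ into $(2t)^{2n-1}$, the multinomial coefficient splits the sum into a product:
\begin{align*}
F&=\sum_{l\geq 1}x^{l-1}\sum_{\substack{\gamma_1\text{ odd}\\ \gamma_2,\ldots,\gamma_l\text{ even}\geq 2}}\frac{(2t)^{\gamma_1}}{\gamma_1!}\prod_{i=2}^{l}\frac{(2t)^{\gamma_i}}{\gamma_i!}\\
&=\Bigl(\sum_{i\geq 1}\frac{(2t)^{2i-1}}{(2i-1)!}\Bigr)\sum_{l\geq 1}\Bigl(x\sum_{i\geq 1}\frac{(2t)^{2i}}{(2i)!}\Bigr)^{l-1},
\end{align*}
and summing the geometric series gives $\sinh(2t)/\bigl(1-x(\cosh(2t)-1)\bigr)$, which is the claimed expression.

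For $L$, the only difference is the factor ${2n-1\choose \co(S)}_B={2n-1\choose \co(S)}\,2^{2n-1-s_1}$ (mirroring the $B^+$-weight used for $G$), so that $2^{2n-1-\gamma_1}t^{2n-1}=t^{\gamma_1}(2t)^{\gamma_2+\cdots+\gamma_l}$. The same splitting then yields
\begin{align*}
L&=\Bigl(\sum_{i\geq 1}\frac{t^{2i-1}}{(2i-1)!}\Bigr)\sum_{l\geq 1}\Bigl(x\sum_{i\geq 1}\frac{(2t)^{2i}}{(2i)!}\Bigr)^{l-1}=\frac{\sinh(t)}{1-x(\cosh(2t)-1)}.
\end{align*}

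No step is genuinely hard; the only thing to be careful about is the parity bookkeeping for the parts of $\co(S)$ and the asymmetry caused by the $B$-weight $2^{n-s_1}$ (which affects only the first part $\gamma_1$). This is the same mechanism that produced $\sinh(t)\sinh(2t)$ in the formula for $G$, and it produces $\sinh(t)$ rather than $\sinh(2t)$ in the formula for $L$.
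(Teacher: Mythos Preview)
Your proposal is correct and follows essentially the same approach as the paper's own proof: the paper also identifies the compositions $\co(S)$ for $S\subseteq\mathrm{O}[2n-1]$ as having $\gamma_1$ odd and $\gamma_2,\ldots,\gamma_l$ even, then splits the multinomial sum into a product and sums the resulting geometric series, with the $2^{2n-1}$ absorbed into $(2t)^{2n-1}$ for $F$ and the $B$-weight $2^{2n-1-\gamma_1}$ producing $\sinh(t)$ in place of $\sinh(2t)$ for $L$.
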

\begin{proof} By definition, if
$S=\{s_1, s_2, \ldots, s_{l-1}\}_<\subseteq \mathrm{O}[2n-1]$, 
let
$\gamma_1=s_1$, $\gamma_i=s_i-s_{i-1}$  with $s_l=2n-1$, then $\gamma_1$ is odd and $\gamma_i$ are even for $i=2,\ldots, l$. Therefore
\begin{align*}
F&=\sum_{n\geq1}\left(\sum_{\gamma}\frac{1}{\gamma_1!}\cdots\frac{1}{\gamma_l!}x^{l-1}\right)(2t)^{2n-1}\\
&=\sum_{l\geq1}\left(\sum_{i\geq1}\frac{(2t)^{2i-1}}
{(2i-1)!} \right)
\left(x\sum_{i\geq1}\frac{(2t)^{2i}}{(2i)!}\right)^{l-1}\\
&=\left(\sum_{i\geq1}\frac{(2t)^{2i-1}}{(2i-1)!}\right)\frac{1}{1-x\sum_{i\geq1}\frac{(2t)^{(2i)}}{(2i)!}},\nonumber
\end{align*}
which equals  the right-hand  side of \eqref{def:F}, besides
\begin{align*}
L
&=\sum_{n\geq1}\left(\sum_{\gamma}\frac{1}{\gamma_1!}\cdots\frac{1}{\gamma_l!} x^{l-1}\right) 2^{2n-1-\gamma_1}t^{2n-1}\\
&=\sum_{l\geq1}
\left(\sum_{i\geq1}\frac{t^{2i-1}}{(2i-1)!} \right)\left(x\sum_{i\geq1}\frac{(2t)^{2i}}{(2i)!}\right)^{l-1}\nonumber\\
&=\left(\sum_{i\geq1}\frac{t^{2i-1}}{(2i-1)!}\right)
\frac{1}{1-x\sum_{i\geq1}\frac{(2t)^{2i}}{(2i)!}},
\end{align*}
which is  equal to the right-hand side of \eqref{def:Fbar}.
\end{proof}

\begin{lem}\label{gfQ+} We have 
\begin{align}
\sum_{n\geq 1}Q_{2n}^+(x,y)\frac{t^{2n}}{(2n)!}&=\frac{G}{1-yH},\label{gf:Qeven}\\
\sum_{n\geq 1}Q_{2n-1}^+(x,y)\frac{t^{2n-1}}{(2n-1)!}&=L+ \frac{yF\,G}{1-yH}.\label{gf:Qodd}
\end{align}
\end{lem}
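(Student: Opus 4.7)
The plan is to adapt the type A argument of Lemma~\ref{key3} by partitioning subsets $S$ according to their even part $S_e$. Fix $S_e = \{m_1 < \cdots < m_{l-1}\}$ (with $m_0 = 0$, $m_l = N$); this decomposes $[N]$ into $l$ outer blocks $B_i = (m_{i-1}, m_i]$ of sizes $N_i = m_i - m_{i-1}$. For $N = 2n$ all $N_i$ are even, while for $N = 2n-1$ exactly $N_l$ is odd. Setting $T_i := S_o \cap B_i$, the key algebraic step is the factorisation
$$
\alpha^+_N(S) = \binom{N}{N_1, \ldots, N_l} \cdot \alpha^+_{N_1}(T_1) \cdot \prod_{i=2}^l \binom{N_i}{\co(T_i)}\, 2^{N_i},
$$
which combines the multinomial identity $\binom{N}{\co(S)} = \binom{N}{N_1, \ldots, N_l}\prod_i \binom{N_i}{\co(T_i)}$ with the fact that $s_1 = \min S$ always lies in $B_1$, so $2^{N-s_1}$ splits as $2^{N_1 - \min T_1}\cdot\prod_{i \geq 2} 2^{N_i}$ (with the convention $\min T_1 := N_1$ when $T_1 = \emptyset$).

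Dividing by $N!$ and attaching $t^N x^{|S_o|} y^{|S_e|}$, the double sum $\sum_N \sum_S$ becomes a convolution over outer blocks in which each of the $l - 1$ dividers $m_i \in S_e$ contributes a factor~$y$. For even-size outer blocks the per-block exponential generating function is $G$ for the first block (recognising $Q^+_{2k}(x, 0) = \sum_{T \subseteq \mathrm{O}[2k-1]} \alpha^+_{2k}(T) x^{|T|}$) and $H$ for each subsequent block; in the odd case, the single-block scenario $l = 1$ contributes $L$, while the last block in a scenario with $l \geq 2$ contributes $F$. Summing the resulting geometric series yields, for \eqref{gf:Qeven}, $\sum_{l \geq 1} G(yH)^{l-1} = G/(1 - yH)$, and for \eqref{gf:Qodd}, $L + \sum_{l \geq 2} y^{l-1}\, G\, H^{l-2}\, F = L + yFG/(1 - yH)$.

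The main obstacle is the bookkeeping of the exponent $N - s_1$ in $\alpha^+_N(S)$, since $s_1 = \min S$ depends nonlocally on the union $S = S_o \cup S_e$. The decomposition succeeds because $\min S \leq m_1$ always places the ``unsigned'' elements inside the first outer block $B_1$; this is precisely what allows the first-block factor to be $\alpha^+_{N_1}(T_1)$ (identifiable with $G$ or $L$) rather than the uniform $\binom{N_1}{\co(T_1)} 2^{N_1}$ form that governs all subsequent blocks.
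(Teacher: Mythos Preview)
Your proposal is correct and follows essentially the same approach as the paper: partition $S$ according to its even part $S_e$, factor $\alpha^+_N(S)$ across the resulting outer blocks, identify the per-block exponential generating functions as $G$ (first even block), $H$ (subsequent even blocks), $L$ (sole odd block when $l=1$), and $F$ (last odd block when $l\geq 2$), and then sum the geometric series. Your write-up is in fact more careful than the paper's about the bookkeeping of the exponent $N-s_1$ (including the convention $\min T_1:=N_1$ when $T_1=\emptyset$), which is exactly the point where the type~B argument differs from the type~A one.
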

\begin{proof}
The left hand side of~\eqref{gf:Qeven} is
\begin{align*}
&\sum_{n\geq1}\left(\sum_{S\subseteq [2n]}\alpha_{2n}^+(S)y^{|S_e|}x^{|S_o|}\right)\frac{t^{2n}}{2n!}\\
&=\sum_{n\geq1}\left[\sum_{S_1\subseteq \mathrm{O}[2m_1]}\binom{2m_1}{\co(S_1)}_Bx^{|S_1|}\frac{t^{2m_1}}{2m_1!}y \right]\cdot
\prod_{i=2}^l\left[\sum_{S_i\subseteq \mathrm{O}[2m_i]}
\binom{2m_i}{\co(S_i)}2^{2m_i}x^{|S_i|}\frac{t^{2m_i}}{2m_i!}y \right]\\
&=\sum_{l\geq1}y^{l-1}\cdot G\cdot H^{l-1},
\end{align*}
which equals   $\frac{G}{1-yH}$.
The left-hand side of~\eqref{gf:Qodd} is
\begin{align*}
&\sum_{n\geq1}\left[\sum_{S_1\subseteq \mathrm{O}[2m_1]}\binom{2m_1}{\co(S_1)}_Bx^{|S_1|}\frac{t^{2m_1}}{2m_1!}y \right]\cdot
\prod_{i=2}^l
\left[\sum_{S_i\subseteq \mathrm{O}[2m_i]}
\binom{2m_i}{\co(S_2)}2^{2m_i}x^{|S_i|}\frac{t^{2m_i}}{2m_i!}y \right]\\
&=L+yF\cdot G\sum_{l\geq0}(yH)^l,
\end{align*}
which equals $L+\frac{yF\cdot G}{1-yH}$.
\end{proof}

Now, combining Lemma~\ref{lem:BQlink} and Lemma~\ref{gfQ+} we have
\begin{align}\label{bp12}
\sum_{n\geq 1}B_{2n}^+(x,y)\frac{t^{2n}}{(2n)!}&=\frac{(\cosh(at)-1)(2x\cosh(at)+x+1)}{1+xy-(x+y)\cosh(2at)},\\\label{obp13}
\sum_{n\geq 1}B_{2n-1}^+(x,y)\frac{t^{2n-1}}{(2n-1)!}&=\frac{\sinh(at)(x-1)(2\cosh(at)y-y-1)}{a(xy+1-(x+y)\cosh(2at))}
\end{align}
with  $a^2=(1-x)(1-y)$. 
It follows from \eqref{linkBQeven} that 
\begin{gather}\label{bp11}
\sum_{n\geq 1}B_{2n}^-(x,y)\frac{t^{2n}}{(2n)!}=\frac{y(\cosh(at)-1)(2\cosh(at)+x+1)}{1+xy-(x+y)\cosh(2at)},\\
\label{obp14}
\sum_{n\geq 1}B_{2n-1}^-(x,y)\frac{t^{2n-1}}{(2n-1)!}=\frac{y\sinh(at)(x-1)(-2\cosh(at)+y+1)}{a(xy+1-(x+y)\cosh(2at))}.
\end{gather}
Combining~\eqref{bp12} with \eqref{bp11} and 
\eqref{obp13} with \eqref{obp14}, we complete the proof of 
Theorem~\ref{thmB1}.

\section{Concluding remarks}    
In  \cite{CS73}  Carlitz and Scoville also considered the more general 
 modulus  $m>2$ for descents  rather 
than parity, i.e., $m=2$. They obtained a general generating function.  However, apart from $m = 2$ 
the generating function is quite explicit only 
 for certain special
cases when $m = 4$. For the $q$-analogue, there are some nice generating functions given by
Kur\c{s}ung\"{o}z and Yee~\cite{KY11}.
It would be very interesting to have results in this direction.

\small

\section*{Acknowledgement}
The authors thank Yao Dong and Mingjian Ding for correcting 
some flaws  in  initial versions of the manuscript.
They  should like to thank the two referees 
for thier  careful reading of the manuscript and  suggestions. The first author's work was supported by the National Natural Science Foundation of China grant 12201468.


\end{document}